\numberwithin{figure}{section}
\numberwithin{table}{section}
\newcommand{\cf}{\textit{cf.\ }} 
\newcommand{\Iverson}[1]{\ensuremath{\left[#1\right]_{\delta}}} 
\title[Lambert Series Factorizations and Factor Pairs]{
       New Factor Pairs for Factorizations of Lambert Series Generating Functions
} 
\author[Mircea Merca and Maxie D. Schmidt]{
        Mircea Merca \\ 
        Academy of Romanian Scientists \\ 
        Splaiul Independentei 54, Bucharest, 050094 Romania \\ 
        \href{mailto:mircea.merca@profinfo.edu.ro}{mircea.merca@profinfo.edu.ro} \\ 
        \\ 
        Maxie D. Schmidt \\ 
        School of Mathematics \\ 
        Georgia Institute of Technology \\ 
        Atlanta, GA 30332 USA \\ 
        \href{mailto:maxieds@gmail.com}{maxieds@gmail.com}
} 
\date{\today} 
\keywords{Lambert series; factorization theorem; matrix factorization; partition function; multiplicative function}
\subjclass[2010]{11A25; 11P81; 05A17; 05A19}
\theoremstyle{plain} 
\newtheorem{theorem}{Theorem}
\newtheorem{cor}[theorem]{Corollary}
\numberwithin{theorem}{section}
\theoremstyle{remark} 
\newtheorem{example}[theorem]{Example}
\newtheorem{remark}[theorem]{Remark}
\begin{document} 

\begin{abstract} 
We prove several new variants of the Lambert series factorization theorem 
established in the first article 
``Generating special arithmetic functions by Lambert series factorizations'' 
by Merca and Schmidt (2017). 
Several characteristic examples of our new results are presented in the article to 
motivate the formulations of the generalized factorization theorems. 
Applications of these new factorization results include new identities involving the 
Euler partition function and the generalized sum-of-divisors functions, the M\"obius function, 
Euler's totient function, the Liouville lambda function, von Mangoldt's lambda function, and the 
Jordan totient function. 
\end{abstract}

\maketitle

\section{Introduction} 

\subsection{Lambert series factorization theorems} 

We consider recurrence relations and matrix equations 
related to \emph{Lambert series} expansions of the form 
\cite[\S 27.7]{NISTHB} \cite[\S 17.10]{HARDYANDWRIGHT} 
\begin{align}
\label{eqn_LambertSeriesfb_def} 
\sum_{n \geq 1} \frac{a_n q^n}{1-q^n} & = \sum_{m \geq 1} b_m q^m,\ |q| < 1, 
\end{align} 
for prescribed arithmetic functions 
$a: \mathbb{Z}^{+} \rightarrow \mathbb{C}$ and 
$b: \mathbb{Z}^{+} \rightarrow \mathbb{C}$ where $b_m = \sum_{d | m} a_d$. 
As in \cite{MERCA-SCHMIDT1}, we are interested in so-termed Lambert series 
factorizations of the form 
\begin{align} 
\label{eqn_FundLSFactorizationForm} 
\sum_{n \geq 1} \frac{a_n q^n}{1-q^n} & = \frac{1}{C(q)} \sum_{n \geq 1} \left( 
     \sum_{k=1}^n s_{n,k} a_k\right) q^n, 
\end{align} 
for arbitrary $\{a_n\}_{n \geq 1}$ and where specifying one of the sequences, 
$c_n := [q^n] 1 / C(q)$ or $s_{n,k}$ with $C(0) := 1$, 
uniquely determines the form of the other. 
In effect, we have ``\emph{factorization pairs}'' in the 
expansions of \eqref{eqn_FundLSFactorizationForm}. 
The special case of 
\[
(C(q), s_{n,k}) \equiv \left((q; q)_{\infty}, s_o(n, k)-s_e(n, k)\right), 
\] 
where $s_o(n, k)$ and $s_e(n, k)$ are respectively the number of $k$'s in all 
partitions of $n$ into an odd (even) number of distinct parts is considered in the 
references \cite{MERCA-SCHMIDT1,MERCA-LSFACTTHM,SCHMIDT-LSFACTTHM}. 
We generalize this result in two key new ways in the next sections. 

Central to the definition of our factorization pairs in 
\eqref{eqn_FundLSFactorizationForm} is the next matrix identity 
providing a factorized representation of special arithmetic functions 
generated by Lambert series expansions where 
\begin{align*} 
A_n & := \left(s_{i,j}\right)_{1 \leq i,j \leq n} 
     \quad\text{ and }\quad 
A_n^{-1} := \left(s_{i,j}^{(-1)}\right)_{1 \leq i,j \leq n}, 
\end{align*} 
and the one-dimensional sequence of $\{B_{m}\}_{m \geq 0}$ depends on the 
arithmetic function, $b_n$, implicit to the expansion of 
\eqref{eqn_LambertSeriesfb_def} and the factorization pair, $(C(q), s_{n,k})$. 
\begin{align} 
\label{eqn_fn_matrix_eqn}
\begin{bmatrix} a_1 \\ a_2 \\ \vdots \\ a_n \end{bmatrix} & = 
     A_n^{-1} \begin{bmatrix} B_0 \\ B_1 \\ \vdots \\ B_{n-1} \end{bmatrix} 
\end{align} 
Thus in order to construct a valid factorization pair we require that both 
the fundamental factorization result in 
\eqref{eqn_FundLSFactorizationForm} hold, and that the 
corresponding construction provides an identity of the form in 
\eqref{eqn_fn_matrix_eqn} for an application-dependent, 
suitable choice of the sequence, $B_m$ (see below). 

\subsection{Significance of our new results} 

In the article we prove several variants and properties of the Lambert series 
factorization theorem defined by \eqref{eqn_FundLSFactorizationForm}. 
Namely, in Section \ref{Section_NaturalGensOfFactorPairs} and 
Section \ref{Section_VariantsOfTheFactThms} we prove 
Theorem \ref{theorem_1}, Theorem \ref{theorem_MainThm_LSFactProps}, and then 
Theorem \ref{theorem_GenLSFactThm_vI} and Theorem \ref{theorem_GenLSFactThm_vII} 
which provide interesting generalized variations of the first two factorization 
theorem results. 
Each of these factorization theorems suggest new relations between sums of an arbitrary 
sequence $\{a_n\}_{n \geq 1}$ over the divisors of an integer $n$ as in 
\eqref{eqn_LambertSeriesfb_def} and more additive identities involving the same sequence. 
Our results proved in the article relate the two branches of additive and multiplicative 
number theory in many interesting new ways. 
Moreover, our new theorems connect several famous special multiplicative functions 
with divisor sums over partitions which are additive in nature. 

Even though there are a number of important results connection the theory of divisors with 
that of partitions and special classical partition functions, these results are more or less 
scattered in their approach. We propose to continue the study of the relationships between 
divisors and partitions with the goal of identifying common threads between these connections 
by the means of our unified factorization theorems of Lambert series generating functions. 
On the multiplicative number theory side, we connect the Euler partition function $p(n)$ 
with other important number theoretic functions including Euler's totient function, 
the M\"obius function, Liouville's lambda function, von Mangoldt's lambda function, the 
Jordan totient functions, and the generalized sum-of-divisors functions by extending the 
results first proved in \cite{MERCA-SCHMIDT1,MERCA-LSFACTTHM,SCHMIDT-LSFACTTHM}. 

\section{Natural generalizations of the factor pairs} 
\label{Section_NaturalGensOfFactorPairs} 

\begin{theorem}
	\label{theorem_1}
	Suppose that $C(q)$ in \eqref{eqn_FundLSFactorizationForm} is fixed. 
	Then for all integers $n, k \geq 1$, 
	we have that 
	\begin{align*} 
	\tag{i}
	s_{n,k} 
	     & = 
	     \sum_{i=1}^{\left\lfloor \frac{n}{k} \right\rfloor} [q^{n-i\cdot k}]C(q), 
	\end{align*} 
	i.e., so that we have a generating function for the general case of $s_{n,k}$ in the form of 
	\begin{align*} 
	\tag{ii}
	s_{n,k} & = [q^n] \frac{q^k}{1-q^k} C(q). 
	\end{align*} 
\end{theorem}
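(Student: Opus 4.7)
My plan is to derive the closed form for $s_{n,k}$ directly from the factorization identity \eqref{eqn_FundLSFactorizationForm} by clearing the denominator $C(q)$ and matching coefficients of $q^n$. The critical observation is that \eqref{eqn_FundLSFactorizationForm} is required to hold for \emph{arbitrary} sequences $\{a_k\}_{k \geq 1}$, so the $a_k$ can be treated as formal indeterminates.

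First, I will multiply both sides of \eqref{eqn_FundLSFactorizationForm} by $C(q)$ and interchange the order of summation on the left to obtain
\[
\sum_{k \geq 1} a_k \cdot C(q)\, \frac{q^k}{1-q^k} \;=\; \sum_{n \geq 1} \left(\sum_{k=1}^n s_{n,k}\, a_k\right) q^n.
\]
Since $q^k/(1-q^k)$ starts at $q^k$ and $C(0)=1$, the formal coefficient $[q^n]\bigl(C(q)\, q^k/(1-q^k)\bigr)$ vanishes whenever $k > n$, so extracting $[q^n]$ from both sides gives
\[
\sum_{k=1}^{n} a_k \cdot [q^n]\!\left(C(q)\, \frac{q^k}{1-q^k}\right) \;=\; \sum_{k=1}^n s_{n,k}\, a_k.
\]
Because this identity holds for every sequence $\{a_k\}$, specializing $a_k = \delta_{k,k_0}$ for each fixed $k_0 \in \{1,\ldots,n\}$ isolates part (ii):
\[
s_{n,k_0} \;=\; [q^n]\!\left(C(q)\, \frac{q^{k_0}}{1-q^{k_0}}\right).
\]
Part (i) then follows immediately by expanding the geometric series $q^k/(1-q^k) = \sum_{i \geq 1} q^{ik}$, applying $[q^n]\bigl(q^{ik} C(q)\bigr) = [q^{n-ik}] C(q)$, and truncating the sum at $i = \lfloor n/k \rfloor$ since $[q^j] C(q) = 0$ for $j < 0$.

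I do not anticipate any serious obstacle: the argument is a routine formal power series manipulation, and the only delicate point is the appeal to universal quantification over $\{a_k\}$ in order to legitimately match coefficients sequence-by-sequence rather than only in aggregate. As an alternative path one could argue via the matrix equation \eqref{eqn_fn_matrix_eqn}, invoking the stated uniqueness of $s_{n,k}$ given $C(q)$, but the direct coefficient-extraction route above is shorter and makes the generating function form in (ii) transparent.
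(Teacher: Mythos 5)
Your proposal is correct and follows essentially the same route as the paper: multiply \eqref{eqn_FundLSFactorizationForm} by $C(q)$, use the arbitrariness of $\{a_k\}$ to equate coefficients of each $a_k$, obtain $\sum_n s_{n,k} q^n = \frac{q^k}{1-q^k} C(q)$, and expand the geometric series to pass between (i) and (ii). The only cosmetic difference is that you derive (ii) first and deduce (i), while the paper states (i) first; the content is identical.
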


\begin{proof}
We rewrite \eqref{eqn_FundLSFactorizationForm} as
$$C(q) \sum_{k=1}^{\infty} \frac{q^k}{1-q^k} a_k = \sum_{k=1}^{\infty} \left( \sum_{n=1}^{\infty} s_{n,k} q^n\right) a_k.$$
Equating the coefficient of $a_k$ in this identity, gives
$$ \sum_{n=1}^{\infty} s_{n,k} q^n = \frac{q^k}{1-q^k} C(q).$$
Rewriting this relation
$$\sum_{n=1}^{\infty} s_{n,k} q^n = C(q) \sum_{n=1}^{\infty} q^{k\cdot n},$$ 
we derived the first claimed relation, which we note easily implies the second,  
where we have invoked the Cauchy multiplication of two power series.	
\end{proof}

\begin{remark} 
We remark that the general factorization in \eqref{eqn_FundLSFactorizationForm} can be easily derived considering the following identity
$$
\sum_{k=1}^n \left( \sum_{d|k} a_d \right) [q^{n-k}] C(q)
=\sum_{k=1}^{n} \left( \sum_{i\geq 1} [q^{n-i\cdot k}] C(q) \right)  a_k.
$$
The case of $C(q)\equiv (q;q)_\infty$ in Theorem \ref{theorem_1} can be rewritten considering Euler's pentagonal number theorem, i.e.,
$$(q;q)_\infty = \sum_{j=0}^{\infty} (-1)^{\lceil j/2 \rceil} q^{G_j},$$
where the exponent
	\begin{equation*}
	G_j = \frac{1}{2} \left\lceil \frac{j}{2} \right\rceil \left\lceil \frac{3j+1}{2} \right\rceil,\qquad j\geq 0,
	\end{equation*}
is the $j^{th}$ generalized pentagonal number. 
In particular, for $n,k>0$ we have that 
\begin{equation*}
s_o(n,k)-s_e(n,k) = \sum_{k | n-G_j} (-1)^{\lceil j/2 \rceil}, 
\end{equation*}
where the sum runs over all positive multiple of $k$ of the form $n-G_j$.
\end{remark}

Theorem \ref{theorem_1}  allows us to give another very interesting special case of 
\eqref{eqn_FundLSFactorizationForm} considered in \cite{MERCA-LSFACTTHM,MERCA-SCHMIDT1}.

\begin{cor}
\label{cor_spcase_of_theorem1_v1} 
	For arbitrary $\{a_n\}_{n \geq 1}$,
	$$\sum_{n \geq 1} \frac{a_n q^n}{1-q^n} = (q;q)_\infty \sum_{n \geq 1} \left( 
	\sum_{k=1}^n s_{n,k} a_k\right) q^n,
	$$
	where $s_{n,k}$ is the number of $k$'s in all unrestricted partitions of $n$.
\end{cor}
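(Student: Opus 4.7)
The plan is to specialize Theorem~\ref{theorem_1} to the factor function $C(q) = 1/(q;q)_\infty$, the Euler partition generating function, and to verify that the resulting $s_{n,k}$ coincides with the combinatorial quantity in the statement. The corollary asserts that $1/C(q) = (q;q)_\infty$ appears on the right-hand side of \eqref{eqn_FundLSFactorizationForm}, so this choice is the correct one; note that the normalization $C(0) = 1$ is respected since $1/(q;q)_\infty$ has constant term $1$.

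Applying part (ii) of Theorem~\ref{theorem_1} with this $C(q)$ immediately gives
\[
s_{n,k} \;=\; [q^n]\,\frac{q^k}{1-q^k}\cdot\frac{1}{(q;q)_\infty}.
\]
To match the combinatorial description, I would expand the first factor as $q^k/(1-q^k) = \sum_{j \geq 1} q^{jk}$ and rewrite the coefficient extraction as $s_{n,k} = \sum_{j \geq 1} p(n - jk)$, where $p$ is the unrestricted partition function.

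The combinatorial step is the standard observation that $p(n - jk)$ counts partitions of $n$ containing at least $j$ copies of the part $k$, via the bijection that adjoins $j$ extra $k$-parts to a partition of $n - jk$. Summing over $j \geq 1$ then counts each partition $\lambda \vdash n$ exactly $m_k(\lambda)$ times, where $m_k(\lambda)$ denotes the multiplicity of $k$ in $\lambda$, so that
\[
s_{n,k} \;=\; \sum_{\lambda \vdash n} m_k(\lambda),
\]
which is precisely the number of $k$'s occurring across all unrestricted partitions of $n$.

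There is no substantive obstacle: the identity reduces to Theorem~\ref{theorem_1}(ii) together with the geometric series expansion and the elementary bijection above. The only small point worth checking is the bookkeeping convention—namely that $C(q) = 1/(q;q)_\infty$ is indeed admissible in \eqref{eqn_FundLSFactorizationForm}—which is immediate.
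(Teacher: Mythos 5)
Your proposal is correct and follows essentially the same route as the paper: specialize Theorem \ref{theorem_1} with $C(q) = 1/(q;q)_\infty$ (so that $1/C(q) = (q;q)_\infty$ sits in front of the sum) and identify $\frac{q^k}{1-q^k}\cdot\frac{1}{(q;q)_\infty}$ as the generating function for the number of $k$'s in all partitions of $n$. The only difference is that the paper simply cites this generating-function fact, whereas you supply the standard geometric-series expansion and the adjoin-$j$-copies-of-$k$ bijection proving it.
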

\begin{proof}
	We take into account the fact that
	$$\frac{q^k}{1-q^k} \cdot \frac{1}{(q;q)_\infty}$$
	is the generating function for the number of $k$'s in all unrestricted partitions of $n$.
	This generating function implies our result. 
\end{proof}

\begin{example}[Applications of the Corollary] 
The result in Corollary \ref{cor_spcase_of_theorem1_v1} 
allows us to derive many special case identities involving Euler's partition function and various arithmetic functions. 
More precisely, by the well-known famous special cases Lambert series identities 
expanded in the introduction to \cite{MERCA-SCHMIDT1}, for $n \geq 1$ we have that 
	\begin{align*}
	& \sum_{k=1}^{n} \sigma_x(k) p(n-k) = \sum_{k=1}^n k^x s_{n,k},\\
	&  p(n-1) = \sum_{k=1}^{n} \mu(k) s_{n,k}, \\
	& \sum_{k=1}^{n} k p(n-k) = \sum_{k=1}^n \phi(k) s_{n,k},\\
	& \sum_{k\geq 1} p(n-k^2) = \sum_{k=1}^n \lambda(k) s_{n,k},\\
	& \sum_{k=1}^n \Lambda(k)p(n-k) = \sum_{k=1}^n \log(k) s_{n,k},\\
	& \sum_{k=1}^n 2^{\omega(k)} p(n-k) = \sum_{k=1}^n | \mu(k) | s_{n,k},\\
	& \sum_{k=1}^n k^t p(n-k) = \sum_{k=1}^n J_t(k) s_{n,k},
	\end{align*}
	where $s_{n,k}$ is the number of $k$'s in all unrestricted partitions of $n$. 
	Moreover, in the case where $a_n \equiv 1$ in the corollary, for $n > 0$ we have that 
	\begin{align*}
	& \sum_{k=-\infty}^{\infty} (-1)^k S(n-k(3k+1)/2) = \sigma_0(n), 
	\end{align*} 
	and that 
	\begin{align*} 
	& \sum_{k=1}^{n} \sigma_0(k) p(n-k) = S(n),
	\end{align*}
	where $S(n)$ is number of parts in all partitions of $n$ (also, sum of largest parts of all partitions of $n$).
	Similarly, in the special case where $a_n := n$, for $n \geq 1$ we have that 
	\begin{align*}
	& \sum_{k=-\infty}^{\infty} (-1)^k (n-k(3k+1)/2)p(n-k(3k+1)/2) = \sigma_1(n), 
	\end{align*} 
	where 
	\begin{align*} 
	& \sum_{k=1}^{n} \sigma_1(k) p(n-k) = n \cdot p(n).
	\end{align*}
\end{example}

\begin{cor}[A Known Factorization] 
	For arbitrary $\{a_n\}_{n \geq 1}$, we have that 
	$$\sum_{n \geq 1} \frac{a_n q^n}{1-q^n} = (q^2;q)_\infty \sum_{n \geq 1} \left( p(n-1)a_1 + 
\sum_{k=2}^n s'_{n,k} a_k\right) q^n,
$$	
	where $s'_{n,k}$ is the number of $k$'s in all unrestricted partitions of $n$ that do not contain $1$ as a part. 
\end{cor}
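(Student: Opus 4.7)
The plan is to apply Theorem~\ref{theorem_1} directly with the choice $C(q) = 1/(q^2;q)_\infty$, so that the prefactor on the right-hand side of \eqref{eqn_FundLSFactorizationForm} becomes $1/C(q) = (q^2;q)_\infty$ as required. Under this choice, Theorem~\ref{theorem_1}(ii) yields the generating-function identity
$$
\sum_{n \geq 1} s_{n,k} q^n = \frac{q^k}{1-q^k} \cdot \frac{1}{(q^2;q)_\infty},
$$
so the entire task reduces to evaluating these coefficients $s_{n,k}$ and matching them against the expression given in the corollary. I would split the analysis into the two regimes $k \geq 2$ and $k = 1$.

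For $k \geq 2$, I would observe that $1/(q^2;q)_\infty = \prod_{j \geq 2} 1/(1-q^j)$ is the generating function for partitions whose parts all lie in $\{2,3,4,\ldots\}$, i.e., those containing no $1$'s. Multiplying by $q^k/(1-q^k)$ then implements the standard trick for counting the total number of $k$'s across all such partitions (mirroring the argument used in the proof of Corollary~\ref{cor_spcase_of_theorem1_v1}), giving precisely $s'_{n,k}$.

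For the exceptional case $k = 1$, the factor $q/(1-q)$ \emph{combines} with $1/(q^2;q)_\infty$ to telescope into the full Euler product $q/(q;q)_\infty$. Since $1/(q;q)_\infty = \sum_{m \geq 0} p(m) q^m$, extracting the coefficient of $q^n$ in $q/(q;q)_\infty$ gives $s_{n,1} = p(n-1)$, which is the coefficient of $a_1$ asserted in the statement.

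No substantive obstacle arises; the proof is essentially a direct specialization of Theorem~\ref{theorem_1} combined with two standard Euler-style generating-function identifications. The only point worth flagging is that the $k=1$ term genuinely behaves differently from the $k \geq 2$ terms, and the separation of cases is not cosmetic but reflects the combinatorial distinction that $s'_{n,k}$ excludes partitions containing $1$'s, with those excluded partitions being exactly the ones ``absorbed'' into the $p(n-1)$ coefficient via the collapse $\tfrac{1}{1-q}\cdot\tfrac{1}{(q^2;q)_\infty} = \tfrac{1}{(q;q)_\infty}$.
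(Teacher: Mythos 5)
Your proposal is correct and follows essentially the same route as the paper: specialize Theorem~\ref{theorem_1} with $C(q) = (q^2;q)_\infty^{-1}$, identify $\frac{q^k}{1-q^k}\cdot\frac{1}{(q^2;q)_\infty}$ for $k\geq 2$ as the generating function for the number of $k$'s in partitions with no part equal to $1$, and observe the collapse $\frac{q}{1-q}\cdot\frac{1}{(q^2;q)_\infty} = \frac{q}{(q;q)_\infty}$ giving $p(n-1)$ for $k=1$. The only cosmetic difference is that the paper rewrites the $k\geq 2$ generating function as $\frac{q^k}{1-q^k}\cdot\frac{1-q}{(q;q)_\infty}$ rather than as the product over parts $\geq 2$.
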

\begin{proof} 
We consider \eqref{eqn_FundLSFactorizationForm} with $C(q)=(q^2;q)_{\infty}^{-1}$. According to Theorem \ref{theorem_1}, the generating function of $s'_{n,1}$ is given by
$$\frac{q}{1-q}\cdot \frac{1}{(q^2;q)_\infty} = \frac{q}{(q;q)_\infty} = \sum_{n=1}^{\infty} p(n-1) q^n.$$
For $k>1$, we see that the generating function of $s'_{n,k}$ is given by
$$\frac{q^k}{1-q^k} \cdot \frac{1}{(q^2;q)_\infty} = \frac{q^k}{1-q^k} \cdot \frac{1-q}{(q;q)_\infty},$$
which is the generating function for the number of $k$'s in all partitions of $n$ that do not contain $1$ as a part.
\end{proof} 

\begin{example}[More Applications of the Corollary] 
	We denote by $p_1(n)$ the number of partition of $n$ that do not contain $1$ as a part.
	For $n \geq 1$ and fixed $x \in \mathbb{C}$, we have that 
	\begin{align*}
	& \sum_{k=1}^{n} \sigma_x(k) p_1(n-k) = p(n-1)+\sum_{k=2}^n k^x s'_{n,k},\\
	&  -p(n-2) = \sum_{k=2}^{n} \mu(k) s'_{n,k}, \\
	& \sum_{k=1}^{n} k p_1(n-k) = p(n-1)+\sum_{k=2}^n \phi(k) s'_{n,k},\\
	& \sum_{k\geq 1}  p_1(n-k^2) = p(n-1)+\sum_{k=2}^n \lambda(k) s'_{n,k},\\
	& \sum_{k=1}^n \Lambda(k)  p_1(n-k) = \sum_{k=2}^n \log(k) s'_{n,k},\\
	& \sum_{k=1}^n 2^{\omega(k)}  p_1(n-k) = p(n-1)+\sum_{k=2}^n | \mu(k) | s'_{n,k},\\
	& \sum_{k=1}^n k^t  p_1(n-k) = p(n-1)+\sum_{k=2}^n J_t(k) s_{n,k}^{\prime},
	\end{align*}
	where $s'_{n,k}$ is the number of $k$'s in all partitions of $n$ that do not contain $1$ as a part.
\end{example}

\begin{cor}[Another Known Factorization] 
\label{cor_AnotherKnownFactorization}
	For arbitrary $\{a_n\}_{n \geq 1}$, we have that 
	$$\sum_{n \geq 1} \frac{a_n q^n}{1-q^n} = (q^3;q)_\infty \sum_{n \geq 1} \left( p_2(n-1)a_1 + p_1(n-2)a_2+ 
	\sum_{k=3}^n s''_{n,k} a_k\right) q^n,
	$$	
	where $p_k(n)$ is the number of partition of $n$ that do not contain $k$ as a part and 
	$s''_{n,k}$ is the number of $k$'s in all unrestricted partitions of $n$ that do not contain $1$ or $2$ as a part. 
\end{cor}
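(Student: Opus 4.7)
The plan is to instantiate the general formula of Theorem \ref{theorem_1} at the particular choice $C(q) = 1/(q^3;q)_\infty$, so that $1/C(q) = (q^3;q)_\infty$ is the factor appearing outside the sum in \eqref{eqn_FundLSFactorizationForm}. Part (ii) of Theorem \ref{theorem_1} then supplies the closed form
$$s''_{n,k} = [q^n] \frac{q^k}{1-q^k} \cdot \frac{1}{(q^3;q)_\infty},$$
and the whole exercise reduces to simplifying this generating function and identifying its coefficients in the three ranges $k = 1$, $k = 2$, and $k \geq 3$. The strategy is a direct extension of the proof of the previous corollary, where only $k=1$ needed separate treatment; the extra small case $k=2$ arises because $1/(q^3;q)_\infty = (1-q)(1-q^2)/(q;q)_\infty$ now contributes two removable factors instead of one.

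For the two small cases I would exploit that identity directly. Multiplying $1/(q^3;q)_\infty$ by $q/(1-q)$ cancels the $(1-q)$ and leaves $q(1-q^2)/(q;q)_\infty$; since $(1-q^2)/(q;q)_\infty$ is the generating function for partitions with no part equal to $2$, this gives $s''_{n,1} = p_2(n-1)$. Multiplying instead by $q^2/(1-q^2)$ cancels the $(1-q^2)$ and leaves $q^2(1-q)/(q;q)_\infty = q^2/(q^2;q)_\infty$, which by the analogous reasoning (no part equal to $1$) yields $s''_{n,2} = p_1(n-2)$.

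For $k \geq 3$ the factor $(1-q^k)^{-1}$ does not cancel with either $(1-q)$ or $(1-q^2)$, and I would expand
$$\frac{q^k}{1-q^k} \cdot \frac{1}{(q^3;q)_\infty} = \sum_{j \geq 1} q^{jk} \sum_{m \geq 0} p_{\geq 3}(m) q^m,$$
where $p_{\geq 3}(m)$ counts partitions of $m$ into parts all at least $3$. The coefficient of $q^n$ equals $\sum_{j \geq 1} p_{\geq 3}(n-jk)$, and by the standard telescoping identity $\sum_\lambda m_k(\lambda) = \sum_{j\geq 1} \#\{\lambda : m_k(\lambda) \geq j\}$ this is exactly the total number of $k$'s across all partitions of $n$ into parts $\geq 3$, matching the definition of $s''_{n,k}$ in the statement.

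I do not anticipate a genuine obstacle: the argument is structurally a routine adaptation of the previous corollary's proof, and the only delicate point is recognizing that the prefactor $(1-q)(1-q^2)$ cancels precisely the $k=1$ and $k=2$ denominators and no others, which is what forces the two exceptional leading summands in the stated factorization.
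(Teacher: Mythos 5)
Your proposal is correct and follows essentially the same route as the paper: instantiate Theorem \ref{theorem_1} with $C(q)=(q^3;q)_{\infty}^{-1}$, cancel the factors $(1-q)$ and $(1-q^2)$ against the $k=1$ and $k=2$ denominators to get $p_2(n-1)$ and $p_1(n-2)$, and recognize $\frac{q^k}{1-q^k}\cdot\frac{1}{(q^3;q)_\infty}$ for $k\geq 3$ as the generating function counting the $k$'s in partitions with no part equal to $1$ or $2$. Your explicit telescoping justification of that last identification is merely a more detailed version of what the paper asserts directly.
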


\begin{proof} 
	We consider \eqref{eqn_FundLSFactorizationForm} with $C(q)=(q^3;q)_{\infty}^{-1}$. According to Theorem \ref{theorem_1}, the generating function of $s''_{n,1}$ is given by
	$$\frac{q}{1-q}\cdot \frac{1}{(q^3;q)_\infty} = \frac{q(1-q^2)}{(q;q)_\infty} = \sum_{n=1}^{\infty} p_2(n-1) q^n.$$
	The generating function for  $s''_{n,2}$ is
	$$\frac{q^2}{1-q^2}\cdot \frac{1}{(q^3;q)_\infty} = \frac{q^2}{(q^2;q)_\infty} = \sum_{n=1}^{\infty} p_1(n-2) q^n.$$
	For $k>2$, we see that the generating function of $s''_{n,k}$ is given by
	$$\frac{q^k}{1-q^k} \cdot \frac{1}{(q^3;q)_\infty} = \frac{q^k}{1-q^k} \cdot \frac{(1-q)(1-q^2)}{(q;q)_\infty},$$
	which is the generating function for the number of $k$'s in all partitions of $n$ that do not contain $1$ or $2$ as a part.
\end{proof}

\begin{cor}[A Generalization of the Known Factorizations] 
For integers $m \geq 1$ and arbitrary $\{a_n\}_{n \geq 1}$, we have a Lambert series 
factorization given by 
\begin{align*} 
\sum_{n \geq 1} \frac{a_n q^n}{1-q^n} & = \sum_{n \geq 1} \left( 
     \sum_{i=1}^{m-1} \sum_{j=1}^{\lfloor n / i \rfloor} p_{m-1}(n-i \cdot j) a_i + 
     \sum_{k=m}^n s_{n,k}^{(m-1)} a_k\right) q^n, 
\end{align*} 
where $p_m(n)$ denotes the number of partitions of $n$ that do not contain 
$1, 2, \ldots, m$ as a part and where $s_{n,k}^{(m)}$ denotes the number of $k$'s in all 
unrestricted partitions that do not contain $1, 2, \ldots, m$ as a part. 
\end{cor}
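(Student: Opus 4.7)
The plan is to invoke Theorem \ref{theorem_1} with the natural generalization $C(q) = (q^m;q)_\infty^{-1}$ of the choices used in the preceding two corollaries (where $m = 2$ and $m = 3$), so that $1/C(q) = (q^m;q)_\infty$ appears as a prefactor on the right-hand side of \eqref{eqn_FundLSFactorizationForm} and the inner coefficients are $s_{n,k} = [q^n]\,\frac{q^k}{(1-q^k)(q^m;q)_\infty}$. The task then splits into interpreting these coefficients in two distinct regimes: the large index range $k \geq m$, and the small index range $1 \leq i \leq m-1$.

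For $k \geq m$, I would argue exactly as in the $m = 2$ and $m = 3$ cases already proved: the factor $1/(q^m;q)_\infty$ generates partitions with no parts in $\{1, 2, \ldots, m-1\}$, while multiplying by $q^k/(1-q^k) = q^k + q^{2k} + \cdots$ contributes one or more extra parts of size $k$. Since $k \geq m$, these extra parts remain admissible, so the product is the generating function for the number of $k$'s in all partitions of $n$ avoiding parts $1, 2, \ldots, m-1$, which is $s_{n,k}^{(m-1)}$ as defined in the statement.

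For $1 \leq i \leq m-1$, the partition-theoretic argument breaks down because $i$ is not admissible in the partitions generated by $1/(q^m;q)_\infty$. Instead I would expand the two factors as power series, $1/(q^m;q)_\infty = \sum_{r \geq 0} p_{m-1}(r) q^r$ and $q^i/(1-q^i) = \sum_{j \geq 1} q^{ij}$, and extract the coefficient of $q^n$ in the product by Cauchy multiplication. This produces the finite sum $\sum_{j=1}^{\lfloor n/i \rfloor} p_{m-1}(n - ij)$, which is precisely the coefficient of $a_i$ in the asserted formula. Assembling the two ranges and substituting into the conclusion of Theorem \ref{theorem_1} (together with the $(q^m;q)_\infty$ prefactor, which should appear on the right) yields the claimed factorization. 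No step is truly an obstacle; the only care needed is the bookkeeping of the case split between $i < m$ and $k \geq m$, which is forced because the clean partition-theoretic identification used for $k \geq m$ cannot be extended uniformly across all $k$.
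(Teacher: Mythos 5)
Your proposal is correct and takes essentially the same route as the paper: both set $C(q) := (q^m;q)_\infty^{-1}$ in \eqref{eqn_FundLSFactorizationForm}, identify the coefficient of $a_k$ for $k \geq m$ partition-theoretically as $s_{n,k}^{(m-1)}$ via Theorem \ref{theorem_1}, and obtain the $1 \leq i < m$ coefficients by the Cauchy product of $q^i/(1-q^i)$ with $\sum_{r \geq 0} p_{m-1}(r) q^r$. You are also right that the prefactor $(q^m;q)_\infty$ should appear on the right-hand side (consistent with the $m=2,3$ corollaries); its absence in the stated corollary is an omission in the paper's statement, not a flaw in your argument.
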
 
\begin{proof} 
The proof of Corollary \ref{cor_AnotherKnownFactorization} is the starting point for proving this 
generalized result. In particular, for the factorization pair determined by 
$C(q) := (q^m; q)_{\infty}^{-1}$ in \eqref{eqn_FundLSFactorizationForm}, 
we have that for $1 \leq i < m$ the coefficient on the right-hand-side of the factorization is 
given by 
\begin{align*} 
\frac{q^i}{1-q^i} \frac{(1-q)(1-q^2) \cdots (1-q^{m-1})}{(q; q)_{\infty}} & = 
     \sum_{n \geq 1} \left(\sum_{j=1}^{\lfloor n/i \rfloor} p_{m-1}(n-ij)\right) q^n. 
\end{align*} 
Similarly, by Theorem \ref{theorem_1} for $k \geq m$ 
we see that the right-hand-side coefficient of 
$a_k$ satisfies the following generating function over $n$: 
\begin{align*} 
\frac{q^k}{1-q^k} \cdot \frac{1}{(q^m; q)_{\infty}} & = \frac{q^k}{1-q^k} \cdot 
     \frac{(1-q)(1-q^2) \cdots (1-q^{m-1})}{(q; q)_{\infty}}. 
     \qedhere
\end{align*} 
\end{proof} 

\begin{theorem}[Generalized Factorization Theorem Identities] 
\label{theorem_MainThm_LSFactProps}
Suppose that the factorization pair $(c_n, s_{n,k})$ in 
\eqref{eqn_FundLSFactorizationForm} is fixed where $c_n := [q^n] 1 / C(q)$. 
Then for all integers $n, k \geq 1$ and $m \geq 0$ with $1 \leq k \leq n$, 
we have that 
\begin{align} 
\tag{i} 
s_{n,k}^{(-1)} & = \sum_{d|n} c_{d-k} \cdot \mu(n / d) \\ 
\tag{ii} 
c_{n-k} & = \sum_{d|n} s_{n,k}^{(-1)} \\ 
\tag{iii} 
B_m & = b_{m+1} + \sum_{k=1}^m [q^k] C(q) \cdot b_{m+1-k}. 
\end{align} 
\end{theorem}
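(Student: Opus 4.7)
The plan is to prove (iii) first, since it is essentially a coefficient extraction from \eqref{eqn_FundLSFactorizationForm}, then to leverage it together with ordinary M\"obius inversion to obtain (i), and finally to recover (ii) by direct M\"obius inversion of (i).

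For (iii) I would multiply \eqref{eqn_FundLSFactorizationForm} through by $C(q)$. Reading the matrix identity \eqref{eqn_fn_matrix_eqn} in the forward direction says $B_{n-1} = \sum_{k=1}^n s_{n,k}\, a_k$, so after clearing $1/C(q)$ the right-hand side of \eqref{eqn_FundLSFactorizationForm} is precisely $\sum_{n \geq 1} B_{n-1} q^n$, while the left-hand side becomes $C(q) \sum_{m \geq 1} b_m q^m$. Extracting $[q^{m+1}]$ on both sides by Cauchy multiplication and separating out the $[q^0] C(q) = 1$ term yields (iii) immediately.

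For (i) the key observation is that $A_n^{-1}$ factors naturally through the intermediate vector $(b_1,\dots,b_n)$ as the product of the M\"obius divisor-inversion matrix, with entries $\mu(n/d)[d\mid n]$, and the lower-triangular Toeplitz matrix encoding multiplication by $1/C(q)$, whose entries are $c_{n-k}$. Concretely, classical M\"obius inversion applied to $b_m = \sum_{d \mid m} a_d$ gives $a_n = \sum_{d \mid n} \mu(n/d)\, b_d$, while the same coefficient argument used for (iii) but with $C(q)$ replaced by $1/C(q)$ yields $b_n = \sum_{k=1}^n c_{n-k}\, B_{k-1}$. Substituting the second relation into the first and interchanging the order of summation produces
\[
a_n = \sum_{k=1}^n B_{k-1} \sum_{d \mid n} \mu(n/d)\, c_{d-k},
\]
where the convention $c_j = 0$ for $j<0$ lets me drop the constraint $d \geq k$. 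Comparing with the row expansion $a_n = \sum_{k=1}^n s_{n,k}^{(-1)}\, B_{k-1}$ of \eqref{eqn_fn_matrix_eqn} and invoking the arbitrariness of $\{a_n\}$ (equivalently of the $B_{k-1}$) identifies $s_{n,k}^{(-1)}$ with the inner divisor sum, giving (i).

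Finally, (ii) is a direct M\"obius inversion of (i): reading (i) with $k$ fixed as $f(n) = \sum_{d \mid n} \mu(n/d)\, g(d)$ where $f(n) := s_{n,k}^{(-1)}$ and $g(d) := c_{d-k}$, the standard inversion returns $g(n) = \sum_{d \mid n} f(d)$, i.e.\ $c_{n-k} = \sum_{d \mid n} s_{d,k}^{(-1)}$, which I take to be the intended form of (ii). The main obstacle is conceptual rather than computational: recognizing that the inverse factor pair $\{s_{n,k}^{(-1)}\}$ is precisely the M\"obius transform (in $n$, for each fixed $k$) of the reciprocal series coefficients of $C(q)$; once this bridge between additive divisor sums and multiplicative series inversion is in place, the bookkeeping is routine. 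The only technical subtlety I would need to address is that $s_{n,k}^{(-1)}$ is well-defined independently of the ambient matrix size, which follows from $A_n$ being lower unitriangular with diagonal entries $s_{i,i} = [q^0]C(q) = 1$.
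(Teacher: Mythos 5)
Your proposal is correct, and its treatment of (iii) coincides with the paper's: both multiply the factorization \eqref{eqn_FundLSFactorizationForm} through by $C(q)$, read the right-hand side as $\sum_{n\geq 1}B_{n-1}q^n$, and extract the coefficient of $q^{m+1}$ using $C(0)=1$. For (i)--(ii), however, you take a genuinely different route, and in the opposite order. The paper proves (ii) first by a specialization trick: it feeds the particular sequence $a_n:=s_{n,k}^{(-1)}$ (for fixed $k$) into the factorization, so that the inner sums $\sum_j s_{n,j}s_{j,k}^{(-1)}=\delta_{n,k}$ collapse the right-hand side to $q^k/C(q)$, whence the Lambert-series coefficient $\sum_{d\mid n}s_{d,k}^{(-1)}$ equals $c_{n-k}$; (i) then follows by M\"obius inversion. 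You instead prove (i) first by factoring $A_n^{-1}$ through the intermediate vector $(b_1,\dots,b_n)$: classical M\"obius inversion gives $a_n=\sum_{d\mid n}\mu(n/d)\,b_d$, the coefficient extraction $b_n=\sum_{k=1}^n c_{n-k}B_{k-1}$ encodes multiplication by $1/C(q)$, and comparing with $a_n=\sum_{k=1}^n s_{n,k}^{(-1)}B_{k-1}$ identifies the entries --- a legitimate identification, since $A_n$ is lower unitriangular so the $B_{k-1}$ range over all of $\mathbb{C}^n$ as $\{a_n\}$ varies, a point you rightly flag; (ii) then drops out by M\"obius inversion. The paper's argument is shorter but hinges on recognizing the delta-collapse for the cleverly chosen $a_n$; your decomposition of $A_n^{-1}$ as (M\"obius divisor matrix)$\,\times\,$(lower-triangular Toeplitz matrix with entries $c_{n-k}$) is slightly longer but exhibits the structural reason behind (i) explicitly. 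You also correctly read (ii) as $c_{n-k}=\sum_{d\mid n}s_{d,k}^{(-1)}$ (the index $n$ in the summand of the stated theorem is a typo), which is precisely the form the paper's proof establishes.
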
 
\begin{proof}[Proof of (i) and (ii)] 
This result is equivalent to showing that 
\[
c_{n-k} = \sum_{d|n} s_{d,k}^{(-1)}, 
\] 
which we do below by mimicking the proof from the reference 
\cite[\S 3]{MERCA-SCHMIDT1}. 
In particular, we consider the Lambert series over the sequence of 
$s_{n,k}^{(-1)}$ for a fixed integer $k \geq 1$ and note its 
factorization from \eqref{eqn_FundLSFactorizationForm} in the form of 
\begin{align*} 
\sum_{d|n} s_{d,k}^{(-1)} & = \sum_{m=0}^n \delta_{n-k,m} \cdot c_m = c_{n-k}. 
     \qedhere
\end{align*} 
\end{proof} 
\begin{proof}[Proof of (iii)] 
By the matrix representation of our factorization theorem given in 
\eqref{eqn_fn_matrix_eqn}, we see by a generating function argument 
starting from \eqref{eqn_FundLSFactorizationForm} that 
\begin{align*} 
B_{n-1} & = \sum_{k=1}^n s_{n,k} a_k \\ 
     & = 
     [q^n] C(q) \sum_{m \geq 1} b_m q^m \\ 
     & = 
     b_n + \sum_{k=1}^n [q^k] C(q) b_{n-k}, 
\end{align*} 
when $C(0) \equiv 1$ as in the factorization theorem stated in the introduction. 
\end{proof} 

Note that (i) in the proposition implies the following closed-form 
generating function for the Lambert series over the inverse matrix 
sequences by M\"obius inversion: 
\begin{align*} 
\sum_{n \geq 1} \frac{s_{n,k}^{(-1)} q^n}{1-q^n} & = \frac{q^k}{C(q)}. 
\end{align*} 
We have additional formulas that relate the sequences implicit to the 
choice of a fixed factorization pair in the form of 
\eqref{eqn_FundLSFactorizationForm}. 
Namely, we see that for $m \geq 1$ 
\begin{align*} 
b_m & = \sum_{d|m} a_d = \sum_{j=0}^m \sum_{k=1}^j s_{j,k} a_k c_{m-j}. 
\end{align*} 
We also have the following determinant-based recurrence relations proved as in the 
reference \cite[\S 2]{MERCA-SCHMIDT1} between the sequences, 
$s_{n,k}$ and $s_{n,k}^{(-1)}$, which are symmetric in that these 
identities still hold if one sequence is interchanged with the other: 
\begin{align} 
\label{eqn_snk_snkinv_det-based_recrels} 
s_{n,j}^{(-1)} & = - \sum_{k=1}^{n-j} s_{n,n+1-k}^{(-1)} \cdot s_{n+1-k,j} + 
     \delta_{n,j} \\ 
\notag 
     & = 
     - \sum_{k=1}^{n-j} s_{n,n-k} \cdot s_{n-k,j}^{(-1)} + \delta_{n,j} \\ 
\notag
     & = 
     -\sum_{k=1}^{n} s_{n,k-1} \cdot s_{k-1,j}^{(-1)} + \delta_{n,j}. 
\end{align} 

\begin{remark}[Tables of Special Matrix Entries]
The tables given in Appendix A on page \pageref{AppendixA_Tables_Figures} 
provide several concrete 
examples of the matrix sequences, $s_{n,k}$ and $s_{n,k}^{(-1)}$, implicit to 
specific factorizations in the form of \eqref{eqn_FundLSFactorizationForm}. 
In particular, these tables correspond to the matrix entries where the 
generating functions, $C(q)$, are defined respectively to be 
$C(q) := (-q; q)_{\infty}^{-1}, (a; q)_{\infty}^{-1}, (q; q^2)_{\infty}^{\pm 1}$. 
The listings in these tables provide explicit special cases that serve to 
demonstrate the results in Theorem \ref{theorem_MainThm_LSFactProps}. 
\end{remark} 


\section{Variations of the factorization theorems} 
\label{Section_VariantsOfTheFactThms}

\subsection{Motivation} 

One topic suggested by M. Merca as we considered generalizations of the factorization theorems 
both in this article and in our first article \cite{MERCA-SCHMIDT1} is to consider what happens in the 
form of Theorem \ref{theorem_MainThm_LSFactProps} part (i) when the 
M\"obius function is replaced by any other special 
multiplicative function, $\gamma(n)$, such as Euler's totient function, $\phi(n)$, or for example by 
von Mangoldt's function, $\Lambda(n)$. In its direct form, the factorization theorem in 
\eqref{eqn_FundLSFactorizationForm} does not accommodate a transformation of this form. 
However, if we change our specification of the fundamental factorization in the theorems from the 
previous section to allow the instance of $a_k$ in the left-hand-side sums of 
\eqref{eqn_FundLSFactorizationForm} to be a function, $\widetilde{a}_k$, 
depending on $\gamma(n)$ and the Lambert series sequence, $a_n$, 
we obtain several interesting new results. The next examples where 
$(C(q), \gamma(n)) := ((q; q)_{\infty}, \phi(n)), ((q; q)_{\infty}, n^{\alpha})$ for some fixed 
$\alpha \in \mathbb{C}$ provide the motivation for the statement of the more general theorem 
given in the next subsection. 

\begin{figure}[ht!]

\begin{minipage}{\linewidth} 
\begin{center} 
\tiny
\begin{equation*} 
\boxed{ 
\begin{array}{cccccccccccccccccc} \hline 
 1 & 0 & 0 & 0 & 0 & 0 & 0 & 0 & 0 & 0 & 0 & 0 \\
 -2 & 1 & 0 & 0 & 0 & 0 & 0 & 0 & 0 & 0 & 0 & 0 \\
 -2 & -1 & 1 & 0 & 0 & 0 & 0 & 0 & 0 & 0 & 0 & 0 \\
 2 & -2 & -1 & 1 & 0 & 0 & 0 & 0 & 0 & 0 & 0 & 0 \\
 -1 & 1 & -1 & -1 & 1 & 0 & 0 & 0 & 0 & 0 & 0 & 0 \\
 8 & -1 & -1 & -1 & -1 & 1 & 0 & 0 & 0 & 0 & 0 & 0 \\
 -5 & 3 & 1 & 0 & -1 & -1 & 1 & 0 & 0 & 0 & 0 & 0 \\
 2 & 1 & 2 & -1 & 0 & -1 & -1 & 1 & 0 & 0 & 0 & 0 \\
 3 & 1 & -2 & 2 & 0 & 0 & -1 & -1 & 1 & 0 & 0 & 0 \\
 1 & -3 & 3 & 1 & 0 & 0 & 0 & -1 & -1 & 1 & 0 & 0 \\
 -11 & 1 & 1 & 1 & 1 & 1 & 0 & 0 & -1 & -1 & 1 & 0 \\
 -2 & 6 & -1 & -2 & 2 & -1 & 1 & 0 & 0 & -1 & -1 & 1 \\
 \hline
\end{array}
}
\end{equation*}
\end{center} 
\subcaption*{(i) $s_{n,k}$} 
\end{minipage} 

\begin{minipage}{\linewidth} 
\begin{center} 
\tiny 
\begin{equation*} 
\boxed{ 
\begin{array}{cccccccccccccccccc} \hline 
 1 & 0 & 0 & 0 & 0 & 0 & 0 & 0 & 0 & 0 & 0 & 0 \\
 2 & 1 & 0 & 0 & 0 & 0 & 0 & 0 & 0 & 0 & 0 & 0 \\
 3 & 1 & 1 & 0 & 0 & 0 & 0 & 0 & 0 & 0 & 0 & 0 \\
 5 & 2 & 1 & 1 & 0 & 0 & 0 & 0 & 0 & 0 & 0 & 0 \\
 6 & 2 & 1 & 1 & 1 & 0 & 0 & 0 & 0 & 0 & 0 & 0 \\
 8 & 5 & 3 & 1 & 1 & 1 & 0 & 0 & 0 & 0 & 0 & 0 \\
 10 & 3 & 2 & 2 & 1 & 1 & 1 & 0 & 0 & 0 & 0 & 0 \\
 13 & 7 & 4 & 3 & 2 & 1 & 1 & 1 & 0 & 0 & 0 & 0 \\
 14 & 7 & 6 & 3 & 2 & 2 & 1 & 1 & 1 & 0 & 0 & 0 \\
 18 & 12 & 6 & 5 & 4 & 2 & 2 & 1 & 1 & 1 & 0 & 0 \\
 20 & 8 & 6 & 5 & 4 & 3 & 2 & 2 & 1 & 1 & 1 & 0 \\
 27 & 18 & 14 & 9 & 6 & 5 & 3 & 2 & 2 & 1 & 1 & 1 \\
 \hline 
\end{array}
} 
\end{equation*} 
\end{center} 
\subcaption*{(ii) $s_{n,k}^{(-1)}$} 
\end{minipage}

\caption{The generalized factorization where $(C(q), \gamma(n)) := ((q; q)_{\infty}, \phi(n))$} 
\label{figure_genfactpair_spcase_v1} 

\end{figure} 

\begin{figure}[ht!]

\begin{minipage}{\linewidth} 
\begin{center} 
\tiny
\begin{equation*} 
\boxed{ 
\begin{array}{cccccccccccccccccc} \hline 
 1 & 0 & 0 & 0 & 0 & 0 & 0 & 0 & 0 & 0 & 0 \\
 -1-2^{\alpha } & 1 & 0 & 0 & 0 & 0 & 0 & 0 & 0 & 0 & 0 \\
 -1+2^{\alpha }-3^{\alpha } & -1 & 1 & 0 & 0 & 0 & 0 & 0 & 0 & 0 & 0 \\
 2^{\alpha }+3^{\alpha } & -1-2^{\alpha } & -1 & 1 & 0 & 0 & 0 & 0 & 0 & 0 & 0 \\
 3^{\alpha }-5^{\alpha } & 2^{\alpha } & -1 & -1 & 1 & 0 & 0 & 0 & 0 & 0 & 0 \\
 1+5^{\alpha }+6^{\alpha } & 2^{\alpha }-3^{\alpha } & -2^{\alpha } & -1 & -1 & 1 & 0 & 0 & 0 & 0 & 0 \\
 -2^{\alpha }+5^{\alpha }-6^{\alpha }-7^{\alpha } & 1+3^{\alpha } & 2^{\alpha } & 0 & -1 & -1 & 1 & 0 & 0 & 0 & 0 \\
 1-3^{\alpha }-6^{\alpha }+7^{\alpha } & 3^{\alpha } & 1+2^{\alpha } & -2^{\alpha } & 0 & -1 & -1 & 1 & 0 & 0 & 0 \\
 -2^{\alpha }+7^{\alpha } & 1-2^{\alpha } & -3^{\alpha } & 1+2^{\alpha } & 0 & 0 & -1 & -1 & 1 & 0 & 0 \\
 -3^{\alpha }-5^{\alpha }+10^{\alpha } & -5^{\alpha } & 1+3^{\alpha } & 2^{\alpha } & 1-2^{\alpha } & 0 & 0 & -1 & -1 & 1 & 0 \\
 6^{\alpha }-10^{\alpha }-11^{\alpha } & -2^{\alpha }-3^{\alpha }+5^{\alpha } & -2^{\alpha }+3^{\alpha } & 1 & 2^{\alpha } & 1 & 0 & 0 & -1 & -1 & 1 \\
 \hline
\end{array}
}
\end{equation*}
\end{center} 
\subcaption*{(i) $s_{n,k}$} 
\end{minipage} 

\begin{minipage}{\linewidth} 
\begin{center} 
\tiny 
\begin{equation*} 
\boxed{ 
\begin{array}{cccccccccccccccccc} \hline 
 1 & 0 & 0 & 0 & 0 & 0 & 0 & 0 & 0 & 0 & 0 \\
 1+2^{\alpha } & 1 & 0 & 0 & 0 & 0 & 0 & 0 & 0 & 0 & 0 \\
 1+3^{\alpha } & 1 & 1 & 0 & 0 & 0 & 0 & 0 & 0 & 0 & 0 \\
 2+2^{\alpha }+4^{\alpha } & 1+2^{\alpha } & 1 & 1 & 0 & 0 & 0 & 0 & 0 & 0 & 0 \\
 2+5^{\alpha } & 2 & 1 & 1 & 1 & 0 & 0 & 0 & 0 & 0 & 0 \\
 3+2^{\alpha }+3^{\alpha }+6^{\alpha } & 2+2^{\alpha }+3^{\alpha } & 2+2^{\alpha } & 1 & 1 & 1 & 0 & 0 & 0 & 0 & 0 \\
 4+7^{\alpha } & 3 & 2 & 2 & 1 & 1 & 1 & 0 & 0 & 0 & 0 \\
 5+2^{\alpha +1}+4^{\alpha }+8^{\alpha } & 4+2^{\alpha }+4^{\alpha } & 3+2^{\alpha } & 2+2^{\alpha } & 2 & 1 & 1 & 1 & 0 & 0 & 0 \\
 6+3^{\alpha }+9^{\alpha } & 5+3^{\alpha } & 4+3^{\alpha } & 3 & 2 & 2 & 1 & 1 & 1 & 0 & 0 \\
 8+2^{\alpha +1}+5^{\alpha }+10^{\alpha } & 6+2^{\alpha +1}+5^{\alpha } & 5+2^{\alpha } & 4+2^{\alpha } & 3+2^{\alpha } & 2 & 2 & 1 & 1 & 1 & 0 \\
 10+11^{\alpha } & 8 & 6 & 5 & 4 & 3 & 2 & 2 & 1 & 1 & 1 \\
 \hline 
\end{array}
} 
\end{equation*} 
\end{center} 
\subcaption*{(ii) $s_{n,k}^{(-1)}$} 
\end{minipage}

\caption{The generalized factorization where $(C(q), \gamma(n)) := ((q; q)_{\infty}, n^{\alpha})$} 
\label{figure_genfactpair_spcase_v2} 

\end{figure} 

\begin{example}[Convolutions with the Euler Totient Function]
Suppose that for an arbitrary sequence, $\{a_m\}_{m \geq 1}$, we define the factorization of the 
Lambert series over $a_n$ to be 
\begin{align*} 
\sum_{n \geq 1} \frac{a_n q^n}{1-q^n} & = \frac{1}{(q; q)_{\infty}} 
     \sum_{n \geq 1} \sum_{k=1}^n s_{n,k}(\phi) \widetilde{a}_k(\phi) \cdot q^n, 
\end{align*} 
where we define $s_{n,k}(\phi)$ in terms of its corresponding inverse sequence through 
\eqref{eqn_snk_snkinv_det-based_recrels} given by the divisor sum 
\begin{align*} 
s_{n,k}^{(-1)}(\phi) & := \sum_{d|n} p(d-k) \cdot \phi(n / d). 
\end{align*} 
Then we have an exact formula given in the following form where we note that 
$n = \sum_{d|n} \phi(d)$: 
\begin{align*} 
\widetilde{a}_k(\phi) & = \sum_{d|k} a_d \cdot (k/d) = k \cdot \sum_{d|k} \frac{a_d}{d}. 
\end{align*} 
The two inverse sequences, $s_{n,k}$ and $s_{n,k}^{(-1)}$, in the case of this example are 
listed in Figure \ref{figure_genfactpair_spcase_v1}. 
\end{example} 

\begin{example}[Convolutions with Powers of $n^{\alpha}$]
We extend the generalized expansion in the previous example by choosing our convolution 
function defining $s_{n,k}^{(-1)}$ to be $n^{\alpha}$. More precisely, 
suppose that for an arbitrary sequence, $\{a_m\}_{m \geq 1}$, we again 
define the factorization of the Lambert series over $a_n$ to be 
\begin{align*} 
\sum_{n \geq 1} \frac{a_n q^n}{1-q^n} & = \frac{1}{(q; q)_{\infty}} 
     \sum_{n \geq 1} \sum_{k=1}^n s_{n,k} \widetilde{a}_k \cdot q^n, 
\end{align*} 
where we similarly define $s_{n,k}$ in terms of its corresponding inverse sequence through 
\eqref{eqn_snk_snkinv_det-based_recrels} given by the divisor sum 
\begin{align*} 
s_{n,k}^{(-1)} & := \sum_{d|n} p(d-k) \cdot (n / d)^{\alpha}. 
\end{align*} 
Then we can once again prove that we have an exact formula given in the 
following form where we note that the generalized sum-of-divisors function is defined by 
$\sigma_{\alpha} = \sum_{d|n} d^{\alpha}$: 
\begin{align*} 
\widetilde{a}_k & = \sum_{d|k} a_d \cdot \sigma_{\alpha}(n/d). 
\end{align*} 
The two corresponding inverse sequences, $s_{n,k}$ and $s_{n,k}^{(-1)}$, 
in the case of this particular modified example are listed in 
Figure \ref{figure_genfactpair_spcase_v2}. 
\end{example} 

\subsection{More general theorems} 

The next theorem makes precise a generalized form of the factorization theorem variant 
suggested by the last two examples in the previous subsection. 

\begin{theorem}[Generalized Factorization Theorem I] 
\label{theorem_GenLSFactThm_vI} 
Suppose that the sequence $\{a_n\}_{n \geq 1}$ is taken to be arbitrary and that the functions, 
$C(q)$ and $\gamma(n)$, are fixed. Then we have a generalized Lambert series factorization 
theorem expanded in the form of 
\begin{align*} 
\sum_{n \geq 1} \frac{a_n q^n}{1-q^n} & = \frac{1}{C(q)} \sum_{n \geq 1} \sum_{k=1}^n 
     s_{n,k}(\gamma) \widetilde{a}_k \cdot q^n, 
\end{align*} 
where $s_{n,k}(\gamma)$ is defined through its inverse sequence by \eqref{eqn_snk_snkinv_det-based_recrels}
according to the formula 
\begin{align*} 
s_{n,k}^{(-1)}(\gamma) & := \sum_{d|n} [q^{d-k}] \frac{1}{C(q)} \cdot \gamma(n/d), 
\end{align*} 
and where for $\widetilde{\gamma}(n) := \sum_{d|n} \gamma(d)$ we have that 
\begin{align*} 
\widetilde{a}_k(\gamma) & = \sum_{d|k} a_d \cdot \widetilde{\gamma}(n/d). 
\end{align*} 
\end{theorem}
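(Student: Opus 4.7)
The plan is to verify the claimed factorization by inverting the $s_{n,k}(\gamma)$ relation and reducing the statement to a chase of divisor sums. First I would multiply both sides of the asserted equation by $C(q)$ and extract coefficients of $q^n$. The left-hand side becomes $[q^n]\,C(q)\sum_{m\geq 1}b_m q^m$, which by Theorem \ref{theorem_MainThm_LSFactProps}(iii) equals $B_{n-1}$. Hence the factorization is equivalent to the finite identity
\[
B_{n-1} \;=\; \sum_{k=1}^n s_{n,k}(\gamma)\,\widetilde{a}_k \qquad (n\geq 1).
\]
The lower triangular matrix $\bigl(s_{i,j}^{(-1)}(\gamma)\bigr)_{1\leq i,j\leq n}$ has diagonal entries all equal to $\gamma(1)$ and is invertible as soon as $\gamma(1)\neq 0$ (which is implicit in the hypothesis that \eqref{eqn_snk_snkinv_det-based_recrels} actually defines $s_{n,k}(\gamma)$). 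Inverting, the display above is equivalent to
\[
\widetilde{a}_n \;=\; \sum_{k=1}^n s_{n,k}^{(-1)}(\gamma)\,B_{k-1}.
\]

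Next I would substitute the defining divisor sum for $s_{n,k}^{(-1)}(\gamma)$, set $c_m := [q^m]\,1/C(q)$, and swap the order of summation over $k$ and $d$ to obtain
\[
\sum_{k=1}^n s_{n,k}^{(-1)}(\gamma)\,B_{k-1}
\;=\; \sum_{d\mid n}\gamma(n/d)\sum_{k=1}^d c_{d-k}B_{k-1}.
\]
The inner sum is the coefficient of $q^d$ in $\tfrac{1}{C(q)}\sum_{k\geq 1}B_{k-1}q^k$; by the identity $\sum_{m\geq 1}b_m q^m = \tfrac{1}{C(q)}\sum_{k\geq 1}B_{k-1}q^k$ (which is just \eqref{eqn_FundLSFactorizationForm} rewritten in terms of the $B_m$), this coefficient equals $b_d = \sum_{e\mid d}a_e$. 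A second divisor swap via $d=ef$ with $e\mid n$ and $f\mid n/e$ then collapses the outer double sum:
\[
\sum_{d\mid n}\gamma(n/d)\,b_d
\;=\; \sum_{e\mid n}a_e\sum_{f\mid n/e}\gamma((n/e)/f)
\;=\; \sum_{e\mid n}a_e\,\widetilde{\gamma}(n/e),
\]
which is exactly the definition of $\widetilde{a}_n(\gamma)$, closing the loop.

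The step most in need of care is really only bookkeeping: performing the two interchanges of divisor-type summations crisply and then invoking the original Lambert series factorization coefficient-by-coefficient at precisely the right moment. Conceptually the argument is the $\gamma$-weighted analogue of the proof of Theorem \ref{theorem_MainThm_LSFactProps}(i)--(ii), in which the collapsing identity $\mu\ast 1=\delta$ for the M\"obius function is replaced by the merely definitional relation $\widetilde{\gamma}=\gamma\ast 1$; correspondingly one no longer gets $\widetilde{a}_k=a_k$ and must carry through the extra divisor-convolution factor visible in the definition of $\widetilde{a}_k(\gamma)$.
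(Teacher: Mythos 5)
Your proposal is correct, and it rests on the same engine as the paper's proof --- the reduction to the inverted matrix identity $\widetilde{a}_n=\sum_{k=1}^n s_{n,k}^{(-1)}(\gamma)B_{k-1}$ followed by the cancellation of $C(q)$ against $1/C(q)$ --- but the bookkeeping is organized differently. The paper fixes a column index $d$, writes $[a_d]\,\widetilde{a}_n=\sum_k s_{n,k}^{(-1)}(\gamma)\,t_{k,d}$ with $t_{k,d}=[q^k]\tfrac{q^d}{1-q^d}C(q)$, and shows that the inner weighted sum $\sum_{i=d}^{r}c_{r-i}\,t_{i,d}$ collapses to the indicator $\Iverson{d\mid r}$, so that $[a_d]\,\widetilde{a}_n=\sum_{r\mid n,\ d\mid r}\gamma(n/r)=\widetilde{\gamma}(n/d)$ when $d\mid n$ and $0$ otherwise. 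You instead collapse the sum over $k$ first, using $\sum_{k=1}^{d}c_{d-k}B_{k-1}=[q^d]\tfrac{1}{C(q)}\,C(q)\sum_m b_mq^m=b_d$, and then finish with the Dirichlet-convolution identity $\gamma\ast b=\gamma\ast(1\ast a)=(\gamma\ast 1)\ast a=\widetilde{\gamma}\ast a$. Your route buys a few things: it makes explicit the equivalence of the stated factorization with the finite identity $B_{n-1}=\sum_k s_{n,k}(\gamma)\widetilde{a}_k$ (the paper asserts the analogue of \eqref{eqn_fn_matrix_eqn} without comment), it records the invertibility condition $\gamma(1)\neq 0$ coming from the diagonal entries of $\bigl(s_{i,j}^{(-1)}(\gamma)\bigr)$, which the paper leaves implicit, and it avoids the somewhat awkward boundary manipulations in the paper's step (ii) (which also carries notational slips, e.g.\ $p(\cdot)$ in place of the general coefficients $c_m$). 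What the paper's organization buys in exchange is the sharper intermediate statement $[a_d]\,\widetilde{a}_n=\widetilde{\gamma}(n/d)$ for $d\mid n$ (and $0$ otherwise), i.e.\ an entry-by-entry description of the matrix taking $(a_d)$ to $(\widetilde{a}_n)$, rather than only the aggregate divisor-sum formula. Either way the substance is the same, and your argument is complete.
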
 
\begin{proof} 
Let 
By the same argument justifying the matrix equation in \eqref{eqn_fn_matrix_eqn} from the 
factorization in \eqref{eqn_FundLSFactorizationForm}, we see that 
\begin{align*} 
\widetilde{a}_n & = \sum_{k=1}^n s_{n,k}^{(-1)} \times [q^k]\left( 
     \sum_{d=1}^k \frac{a_d q^d}{1-q^d} C(q)\right). 
\end{align*} 
Thus for fixed $n \geq 1$ and each $1 \leq d \leq n$ we have that 
\begin{align*} 
[a_d] \widetilde{a}_n & = \sum_{k=1}^n s_{n,k}^{(-1)} \times 
     \underset{:= t_{k,d}}{\underbrace{[q^k] \frac{q^d}{1-q^d} C(q)}} \\ 
     & = 
     \sum_{k=d}^n \left(\sum_{r|n} p(r-k) \gamma(n/r)\right) \cdot t_{k,d} \\ 
     & = 
     \sum_{r|n} \left[p(r-d) t_{d,d}+p(r-d-1) t_{d+1,d} + \cdots + p(0) t_{r,d}\right] 
     \gamma(n / r) \\ 
\tag{i} 
     & = 
     \sum_{r|n} \left(\sum_{i=d}^r p(r-i) \cdot t_{i,d}\right) \gamma(n/r). 
\end{align*} 
If we can show that the inner sum is one when $d|r$ where $d|n$ and zero otherwise, we have completed the 
proof of our result. 
We note that for $d \geq 1$ and $i \geq d-1$ we have that $t_{i,d} = [q^{i-d}] C(q)$. 
Then we continue expanding the inner sum in (i) as\footnote{ 
     \underline{\emph{Notation}}: 
     \emph{Iverson's convention} compactly specifies 
     boolean-valued conditions and is equivalent to the 
     \emph{Kronecker delta function}, $\delta_{i,j}$, as 
     $\Iverson{n = k} \equiv \delta_{n,k}$. 
     Similarly, $\Iverson{\mathtt{cond = True}} \equiv 
                 \delta_{\mathtt{cond}, \mathtt{True}}$ 
     in the remainder of the article. 
} 
\begin{align*} 
\tag{ii} 
\sum_{i=d}^r p(r-t) \cdot t_{i,d} & = \sum_{i=0}^{r-d} p(r-d-i) \cdot t_{i+d,d} + 
     \sum_{i=0}^{d-1} p(r-i) \cdot [q^{i-d}] \frac{C(q)}{1-q^d} \\ 
     & = 
     [q^{r-d}] \frac{1}{\cancel{C(q)}} \frac{q^d}{1-q^d} \cancel{C(q)} \\ 
     & = 
     \Iverson{d|r\text{ where }r|n}. 
\end{align*} 
Hence, we have from (i) and (ii) that for $1 \leq d \leq n$ 
\begin{align*} 
[a_d] \widetilde{a_n} & = 
     \begin{cases} 
     \widetilde{\gamma}(n/d) = \sum_{r|\frac{n}{d}} \gamma\left(\frac{n}{dr}\right), & 
     \text{ if $d|n$; } \\ 
     0, & \text{ if $d \not{\mid} n$, } 
     \end{cases} 
\end{align*} 
which implies our formula for $\widetilde{a}_n$ stated in the theorem. 
Here, we notice that it is apparent from the factorization given in the first 
equation of the theorem that $\widetilde{a}_n = \sum_{d=1}^n \gamma_{n,d} \cdot a_d$ 
for some coefficients, $\gamma_{n,d}$, which we have just proved a formula for 
in the previous equation. 
\end{proof} 

\begin{example}[New Convolution Identities from the Matrix Factorizations]
The corresponding matrix factorization representation from \eqref{eqn_fn_matrix_eqn} 
resulting from the theorem provides that for all $n \geq 1$ and fixed 
factorization pair parameter $C(q)$ we have that 
\begin{align} 
\label{eqn_Tildean_MatrixEqn_Ident_examples}
\widetilde{a}_n & = \sum_{k=1}^n s_{n,k}^{(-1)} \cdot B_{k-1}, 
\end{align} 
where $\widetilde{a}_n$, $s_{n,k}^{(-1)}$, and $B_m$ are respectively defined as in 
Theorem \ref{theorem_GenLSFactThm_vI} and Theorem \ref{theorem_MainThm_LSFactProps}. 
One corollary of this result (among many) provides an exact expression for the 
coefficients of the Lambert series over the generalized sum-of-divisors function, 
$\sigma_{\alpha}(n)$, for any fixed $\alpha \in \mathbb{C}$: 
\begin{align*} 
[q^n] & \left(\sum_{m \geq 1} \frac{\sigma_{\alpha}(m) q^m}{1-q^m}\right) = 
     \sum_{d|n} \sigma_{\alpha}(d) \\ 
     & = 
     \sum_{k=1}^n \left(\sum_{d|n} p(d-k) (n/d)^{\alpha}\right) \left[ 
     \sigma_0(k) + \sum_{s = \pm 1} \sum_{j=1}^{\left\lfloor \frac{\sqrt{24k+1}-s}{6} \right\rfloor} 
     \sigma_0\left(k-\frac{j(3j+s)}{2}\right)\right].
\end{align*} 
Similarly, by setting $a_n := n^{\beta}$ and $\gamma(n) := n^{\alpha}$ 
for some fixed $\alpha, \beta \in \mathbb{C}$, we obtain the identity 
\begin{align*} 
     \sum_{d|n} & d^{\beta} \cdot \sigma_{\alpha}(n/d) \\ 
     & = 
     \sum_{k=1}^n \left(\sum_{d|n} p(d-k) (n/d)^{\alpha}\right) \left[ 
     \sigma_{\beta}(k) + \sum_{s = \pm 1} \sum_{j=1}^{\left\lfloor \frac{\sqrt{24k+1}-s}{6} \right\rfloor} 
     \sigma_{\beta}\left(k-\frac{j(3j+s)}{2}\right)\right].
\end{align*} 
If we set $(a_n, \gamma(n)) := (n^{\beta}, \phi(n))$ for a fixed $\beta$, we obtain the following 
related identity: 
\begin{align*} 
     & n \cdot \sigma_{\beta-1}(n) \\ 
     & = 
     \sum_{k=1}^n \left(\sum_{d|n} p(d-k) \phi(n/d)\right) \left[ 
     \sigma_{\beta}(k) + \sum_{s = \pm 1} \sum_{j=1}^{\left\lfloor \frac{\sqrt{24k+1}-s}{6} \right\rfloor} 
     \sigma_{\beta}\left(k-\frac{j(3j+s)}{2}\right)\right].
\end{align*} 
Given the known special case Lambert series identities expanded in \cite[\S 1; \cf \S 3]{MERCA-SCHMIDT1}, 
we also have the following mixed bag of additional identities which are easily proved from 
\eqref{eqn_Tildean_MatrixEqn_Ident_examples} for fixed $\beta, t \in \mathbb{C}$: 
\begin{align*} 
 & \sum_{d|n} \log(d) \\ 
 & \quad = \sum_{k=1}^n \left(\sum_{d|n} p(d-k) \Lambda(n/d)\right) \left[ 
     \sigma_{0}(k) + \sum_{s = \pm 1} \sum_{j=1}^{\left\lfloor \frac{\sqrt{24k+1}-s}{6} \right\rfloor} 
     \sigma_{0}\left(k-\frac{j(3j+s)}{2}\right)\right] \\ 
 & \log(n) \sigma_{\beta}(n) - \sum_{d|n} d^{\beta} \log(d) \\ 
 & \quad = \sum_{k=1}^n \left(\sum_{d|n} p(d-k) \Lambda(n/d)\right) \left[ 
     \sigma_{\beta}(k) + \sum_{s = \pm 1} \sum_{j=1}^{\left\lfloor \frac{\sqrt{24k+1}-s}{6} \right\rfloor} 
     \sigma_{\beta}\left(k-\frac{j(3j+s)}{2}\right)\right] \\ 
 & \Lambda(n) \\ 
 & \quad = \sum_{k=1}^n \left(\sum_{d|n} p(d-k) \Lambda(n/d)\right) \left[ 
     \Iverson{k=1} + \sum_{s = \pm 1} \sum_{j=1}^{\left\lfloor \frac{\sqrt{24k+1}-s}{6} \right\rfloor} 
     \Iverson{k-\frac{j(3j+s)}{2}=1}\right] \\ 
 & n^{t} \cdot \sigma_{\beta-t}(n) \\ 
 & \quad = \sum_{k=1}^n \left(\sum_{d|n} p(d-k) J_t(n/d)\right) \left[ 
     \sigma_{\beta}(k) + \sum_{s = \pm 1} \sum_{j=1}^{\left\lfloor \frac{\sqrt{24k+1}-s}{6} \right\rfloor} 
     \sigma_{\beta}\left(k-\frac{j(3j+s)}{2}\right)\right]. 
\end{align*}
We note that these identities implicitly involving the Euler partition function $p(n)$ correspond to the 
choice of the factorization pair parameter $C(q) := (q; q)_{\infty}$. We could just as easily re-phrase these 
expansions in terms of the partition function $q(n)$ where $C(q) = 1 / (-q; q)_{\infty}$, or in terms of 
any number of other special sequences with a reciprocal generating function of $C(q)$. 
\end{example} 

\subsection{A second variation of the theorem}

\begin{table}[ht!] 
\centering

\small 
\begin{equation*} 
\begin{array}{|c||ll|} \hline\hline
m & \widetilde{a}_m^{\prime} & \sum_{d|m} \widetilde{a}_d^{\prime} \\ \hline\hline 
 1 & a_1 & a_1 \\
 2 & a_2-4 a_1 & a_2-3 a_1 \\
 3 & -2 a_1-2 a_2+a_3 & -a_1-2 a_2+a_3 \\
 4 & 9 a_1-3 a_2-2 a_3+a_4 & 6 a_1-2 a_2-2 a_3+a_4 \\
 5 & -2 a_1+4 a_2-a_3-2 a_4+a_5 & -a_1+4 a_2-a_3-2 a_4+a_5 \\
 6 & 13 a_1+a_2-a_4-2 a_5+a_6 & 8 a_1+a_3-a_4-2 a_5+a_6 \\
 7 & -15 a_1+4 a_2+3 a_3+2 a_4-a_5-2 a_6+a_7 & -14 a_1+4 a_2+3 a_3+2 a_4-a_5-2 a_6+a_7 \\
 8 & -8 a_1+5 a_3-a_4+2 a_5-a_6-2 a_7+a_8 & -2 a_1-2 a_2+3 a_3+2 a_5-a_6-2 a_7+a_8 \\
\hline\hline 
\end{array}
\end{equation*} 

\caption{Example of a second variant of the generalized factorization theorem where 
         $(C(q), \gamma(n)) := ((q; q)_{\infty}, \phi(n))$.} 
\label{table_example_GenFactThmII_spcase_PnCVLEulerPhi} 

\end{table} 

\begin{example}[A Second Variation of the Theorem] 
Let the sequence $\{a_n\}_{n \geq 1}$ be fixed and suppose that the functions, 
$C(q) := (q; q)_{\infty}$ and $\gamma(n) := \phi(n)$. 
Then we have another construction of a  generalized Lambert series factorization 
theorem for these parameters expanded in the form of 
\begin{align*} 
\sum_{n \geq 1} \frac{\widetilde{a}^{\prime}_n q^n}{1-q^n} & = \frac{1}{(q; q)_{\infty}} \sum_{n \geq 1} \sum_{k=1}^n 
     s_{n,k}(\phi) a_k \cdot q^n, 
\end{align*} 
where $s_{n,k}(\phi)$ is defined through its inverse sequence by 
\eqref{eqn_snk_snkinv_det-based_recrels}according to the formula 
\begin{align*} 
s_{n,k}^{(-1)}(\phi) & := \sum_{d|n} p(d-k) \cdot \phi(n/d). 
\end{align*} 
Table \ref{table_example_GenFactThmII_spcase_PnCVLEulerPhi} provides the first several cases of the 
right-hand-side terms, $\widetilde{a}^{\prime}_n$, and the divisor sums over this sequence that define the 
series coefficients of the right-hand-side Lambert series expansion in the second to last equation. 
\end{example} 

We generalize the results in the preceding example by the next theorem. 

\begin{theorem}[Generalized Factorization Theorem II] 
\label{theorem_GenLSFactThm_vII}
Suppose that the sequence $\{a_n\}_{n \geq 1}$ is taken to be arbitrary and that the functions, 
$C(q)$ and $\gamma(n)$, are fixed. Then we have a generalized Lambert series factorization 
theorem expanded in the form of 
\begin{align*} 
\sum_{n \geq 1} \frac{\widetilde{a}^{\prime}_n q^n}{1-q^n} & = \frac{1}{C(q)} \sum_{n \geq 1} \sum_{k=1}^n 
     s_{n,k}(\gamma) a_k \cdot q^n, 
\end{align*} 
where $s_{n,k}(\gamma)$ is defined through its inverse sequence by 
\eqref{eqn_snk_snkinv_det-based_recrels} according to the formula 
\begin{align*} 
s_{n,k}^{(-1)}(\gamma) & := \sum_{d|n} [q^{d-k}] \frac{1}{C(q)} \cdot \gamma(n/d), 
\end{align*} 
and where we have that for all $m \geq 1$
\begin{align*} 
\sum_{d|m} \widetilde{a}_d^{\prime}(\gamma) & = \sum_{i=1}^m \sum_{j=1}^{m+1-i} 
     a_i \cdot s_{m+1-j,i} \cdot [q^{j-1}] \frac{1}{C(q)}. 
\end{align*} 
\end{theorem}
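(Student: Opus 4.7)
The plan is to prove the stated divisor-sum identity by matching the coefficient of $q^m$ on both sides of the factorization asserted in the theorem, treating that factorization itself as the implicit definition of the sequence $\{\widetilde{a}^{\prime}_n(\gamma)\}_{n\geq 1}$. Since the map $\{b_n\} \mapsto \sum_{n\geq 1}\frac{b_n q^n}{1-q^n}$ is a formal-power-series bijection (its inverse being M\"obius inversion over divisors), the factorization uniquely determines $\widetilde{a}^{\prime}_n(\gamma)$ once the right-hand side is fixed, so there is nothing to verify about existence --- only the coefficient-extraction identity.

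On the left-hand side, expanding $\frac{q^n}{1-q^n} = \sum_{j\geq 1} q^{jn}$ and swapping the order of summation yields the familiar Lambert-series reading
\[
[q^m] \sum_{n\geq 1} \frac{\widetilde{a}^{\prime}_n q^n}{1-q^n} = \sum_{d|m} \widetilde{a}^{\prime}_d(\gamma),
\]
which is exactly the sum on the left of the asserted closed form. It therefore suffices to evaluate $[q^m]$ of the right-hand side of the factorization. To this end I apply Cauchy multiplication of $\frac{1}{C(q)} = \sum_{j\geq 0}\bigl([q^j]\tfrac{1}{C(q)}\bigr) q^j$ against $\sum_{n\geq 1}\bigl(\sum_{k=1}^n s_{n,k}(\gamma) a_k\bigr) q^n$, obtaining
\[
[q^m]\,\mathrm{RHS} = \sum_{n=1}^{m}\Bigl([q^{m-n}] \tfrac{1}{C(q)}\Bigr)\sum_{k=1}^n s_{n,k}(\gamma)\, a_k.
\]

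Next I reindex with $j := m+1-n$, so that $n = m+1-j$ and $m-n = j-1$, and $j$ runs from $1$ to $m$. Swapping the order of summation with $i := k$, using the constraint $k \leq n = m+1-j$ to translate into $j \leq m+1-i$, the expression becomes precisely
\[
\sum_{i=1}^{m}\sum_{j=1}^{m+1-i} a_i \cdot s_{m+1-j,i}(\gamma) \cdot [q^{j-1}] \tfrac{1}{C(q)},
\]
which is the claimed formula (reading $s_{m+1-j,i}$ in the theorem statement as $s_{m+1-j,i}(\gamma)$). Combining this with the LHS evaluation above completes the proof.

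The argument is essentially mechanical, being a direct reworking of the Cauchy-product technique used for Theorem \ref{theorem_1}(ii). The one spot that deserves care is the bookkeeping when swapping the order of summation and translating $k \leq m+1-j$ into $j \leq m+1-i$; this is the main --- and really only --- obstacle, and no deeper ingredient (no use of \eqref{eqn_snk_snkinv_det-based_recrels}, no inversion of $s_{n,k}$) is required beyond the Cauchy product and the standard divisor-sum reading of a Lambert series.
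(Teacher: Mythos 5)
Your proof is correct and follows essentially the same route as the paper: extract the coefficient of $q^m$ on both sides, read the left-hand side as the divisor sum $\sum_{d|m}\widetilde{a}^{\prime}_d$, expand the right-hand side by the Cauchy product with $1/C(q)$, and reindex (your substitution $j:=m+1-n$ plus the swap of summation order is just a reordering of the paper's swap-restrict-shift bookkeeping, and your version even avoids having to invoke the vanishing of $s_{n,k}$ for $k>n$ since the bound $k\leq n$ is already built into the sum). No gap.
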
 
\begin{proof} 
We equate the left-hand-side to the right-hand-side of the theorem statement to obtain the 
expansions 
\begin{align*} 
\sum_{d|n} \widetilde{a}^{\prime}_d & = [q^n]\left(\sum_{n \geq 1} \frac{\widetilde{a}^{\prime}_n q^n}{1-q^n}\right) \\ 
     & = 
     \sum_{j=0}^n \sum_{k=1}^{n-j} s_{n-j,k} a_k \cdot [q^j] \frac{1}{C(q)} \\ 
     & = 
     \sum_{k=1}^n \sum_{j=0}^n s_{n-j,k} a_k \cdot [q^j] \frac{1}{C(q)} \\ 
     & = 
     \sum_{k=1}^n \sum_{j=0}^{n-k} s_{n-j,k} a_k \cdot [q^j] \frac{1}{C(q)}, 
\end{align*} 
since $s_{n-j,k}$ is zero-valued for $n-j < k$ which requires that for $s_{n,k}$ to be 
potentially non-zero we must have that $n-j \geq k$, or equivalently that 
$n-k \geq j$ as the upper bound of the inner sum with respect to $j$. 
Shifting the index of summation in the inner sum by one then leads to the identity for these 
Lambert series coefficients over powers of $q^n$. Hence we have proved the theorem. 
\end{proof}

\section{Conclusions} 
\label{Section_Concl} 

\subsection{Summary} 

In Section \ref{Section_NaturalGensOfFactorPairs} and Section \ref{Section_VariantsOfTheFactThms} 
we proved several new forms of the Lambert 
series factorization theorem in \eqref{eqn_FundLSFactorizationForm} 
which is defined by the dependent factor pair 
parameters, $C(q)$ and $s_{n,k}$. The interpretation of these theorems provides a 
corresponding matrix factorization which effectively generalizes the known result in 
\eqref{eqn_fn_matrix_eqn} from \cite{MERCA-SCHMIDT1,SCHMIDT-LSFACTTHM}. 
The first theorems proved in Section \ref{Section_NaturalGensOfFactorPairs} also lead 
to a number of new summation identities connecting partition functions such as $p(n)$ 
with sums over special multiplicative functions with well-known Lambert series expansions 
found in the literature \cite[\cf \S 1, \S 3]{MERCA-SCHMIDT1}. 
The generalizations of the first pair of theorems we proved later in the 
variations of Section \ref{Section_VariantsOfTheFactThms} 
provide yet additional interpretations and identities between sums of the 
functions implicit to \eqref{eqn_LambertSeriesfb_def}, generalized partition functions, and 
other special multiplicative functions of importance in number theory. 

\subsection{Even more general factorization theorems} 

\begin{table}[ht!] 
\centering

\small 
\begin{equation*} 
\begin{array}{|cccccccc|} \hline\hline
 0 & 0 & 0 & 0 & 0 & 0 & 0 & 0 \\
 0 & 0 & 0 & 0 & 0 & 0 & 0 & 0 \\
 1 & 0 & 0 & 0 & 0 & 0 & 0 & 0 \\
 -1 & 0 & 0 & 0 & 0 & 0 & 0 & 0 \\
 -1 & 1 & 0 & 0 & 0 & 0 & 0 & 0 \\
 d & -1 & 0 & 0 & 0 & 0 & 0 & 0 \\
 -d & -1 & 1 & 0 & 0 & 0 & 0 & 0 \\
 1-d & 0 & -1 & 0 & 0 & 0 & 0 & 0 \\
 d^2 & 0 & -1 & 1 & 0 & 0 & 0 & 0 \\
 1-d^2 & d+1 & 0 & -1 & 0 & 0 & 0 & 0 \\
 d-d^2 & -d & 0 & -1 & 1 & 0 & 0 & 0 \\
 d^3 & 1-d & 1 & 0 & -1 & 0 & 0 & 0 \\
 d-d^3 & 0 & 0 & 0 & -1 & 1 & 0 & 0 \\
 d^2-d^3 & 0 & d+1 & 1 & 0 & -1 & 0 & 0 \\
 d^4-1 & d (d+1) & -d & 0 & 0 & -1 & 1 & 0 \\
 d^2-d^4 & -d^2 & -d & 1 & 1 & 0 & -1 & 0 \\
\hline\hline 
\end{array}
\end{equation*} 

\caption{Example of the factorization sequence $s_{n,k}(d)$ for the 
         factorization of $L(c, d; 2, 1, 2, 1)$ when $C(q) := (q; q)_{\infty}$ 
         defined on page \pageref{eqn_Lcd2121_factorization_thm}.} 
\label{table_GenLSeries_alphagammaEQ21} 

\end{table} 

\subsubsection{Expansions of generalized Lambert series} 

We seek to generalize the factorization theorem result in 
\eqref{eqn_FundLSFactorizationForm} to a corresponding form 
for the following generalized Lambert series expansions for fixed constants 
$c, d, \alpha, \beta, \gamma, \delta \in \mathbb{C}$ defined such that the series converges: 
\begin{align} 
\label{eqn_GenLambertSeries_LacdAlphaBetaGammaDelta_exp_v1} 
L_a(c, d; \alpha, \beta, \gamma, \delta) & := 
     \sum_{n \geq 1} \frac{a_n c^n q^{\alpha n+\beta}}{1-d \cdot q^{\gamma n+\delta}}. 
\end{align} 
It is not difficult to show that the series coefficients of $q^n$ in the previous Lambert series 
expansion are given in closed-form according to the special case formula 
\[
[q^n] L_a(c, d; \alpha, \gamma, \alpha, \gamma) = 
     \sum_{\substack{\alpha m+\gamma \\ m \geq \alpha+\gamma}} c^m a_m \cdot d^{\frac{n}{\alpha m+\gamma}-1}. 
\]
Applications of a corresponding factorization result include new identities for the generalized 
Lambert series generating the sum-of-squares function, $r_2(n)$, in the form of 
\cite[\S 17.10]{HARDYANDWRIGHT} 
\begin{align*} 
\sum_{m \geq 1} r_2(m) q^m & = \sum_{n \geq 1} \frac{4 \cdot (-1)^{n+1} q^{2n+1}}{1-q^{2n+1}}. 
\end{align*} 
For example, we may formulate a generalized variant of the factorization theorems in this article as 
\begin{align*} 
\sum_{n \geq 1} \frac{a_n c^n q^{2n+1}}{1-d \cdot q^{2n+1}} & = 
     \frac{1}{C(q)} \sum_{n \geq 1} \sum_{k=1}^n s_{n,k}(d) c^k a_k \cdot q^n, 
\end{align*} 
where for an arbitrary sequence, $\{a_n\}_{n \geq 1}$, we have that the series coefficients of the 
left-hand-side Lambert series in the previous equation are given by 
\label{eqn_Lcd2121_factorization_thm}
\begin{align*}
[q^n] L(c, d; 2, 1, 2, 1) & = \sum_{\substack{2m+1|n \\ m>1}} c^m a_m \cdot d^{\frac{n}{2m+1}-1}. 
\end{align*} 
For $C(q) := (q; q)_{\infty}$, an example of the factorization in the second to last equation is 
shown in Table \ref{table_GenLSeries_alphagammaEQ21}. 

The expansions of the generalized Lambert series in 
\eqref{eqn_GenLambertSeries_LacdAlphaBetaGammaDelta_exp_v1} also allow us to approach new identities for the 
Lambert series generating the logarithmic derivatives of the Jacobi theta functions in the forms of 
\cite[\S 20.5(ii)]{NISTHB} 
\begin{align*} 
\frac{\vartheta_1^{\prime}(z, q)}{\vartheta_1(z, q)} & = 
     4 \sum_{n \geq 1} \frac{\sin(2nz) q^{2n}}{1-q^{2n}} + \cot(z) \\ 
\frac{\vartheta_2^{\prime}(z, q)}{\vartheta_2(z, q)} & = 
     4 \sum_{n \geq 1} \frac{(-1)^n \sin(2nz) q^{2n}}{1-q^{2n}} -\tan(z) \\ 
\frac{\vartheta_3^{\prime}(z, q)}{\vartheta_3(z, q)} & = 
     4 \sum_{n \geq 1} \frac{(-1)^n \sin(2nz) q^{n}}{1-q^{2n}} \\ 
\frac{\vartheta_4^{\prime}(z, q)}{\vartheta_4(z, q)} & = 
     4 \sum_{n \geq 1} \frac{\sin(2nz) q^{n}}{1-q^{2n}}. 
\end{align*} 
Similarly, by considering derivatives of the generalized Lambert series as in 
\cite{SCHMIDT-COMBSUMSBDDDIV}, we can generate higher-order cases of the 
derivatives of the Jacobi theta functions, including the following identities 
\cite[\S 20.4(ii)]{NISTHB}: 
\begin{align*} 
\frac{\vartheta_1^{\prime\prime\prime}(0, q)}{\vartheta_1^{\prime}(0, q)} & = 
     -1 + 24 \sum_{n \geq 1} \frac{q^{2n}}{(1-q^{2n})^2} \\ 
\frac{\vartheta_2^{\prime\prime}(0, q)}{\vartheta_2(0, q)} & = 
     -1 - 8 \sum_{n \geq 1} \frac{q^{2n}}{(1+q^{2n})^2} \\ 
\frac{\vartheta_3^{\prime\prime}(0, q)}{\vartheta_3(0, q)} & = 
     - 8 \sum_{n \geq 1} \frac{q^{2n-1}}{(1+q^{2n-1})^2} \\ 
\frac{\vartheta_4^{\prime\prime}(0, q)}{\vartheta_4(0, q)} & = 
     8 \sum_{n \geq 1} \frac{q^{2n-1}}{(1-q^{2n-1})^2}. 
\end{align*} 

\subsubsection{Transformations of Lambert series} 

One possible transformation providing an application of the generalized factorization theorems we 
have already proved within this article is given by 
\begin{align*}
\sum_{n=1}^{\infty} \frac{a_n q^n}{1+q^n} 
&  = \sum_{n=1}^{\infty} \frac{a_n q^n}{1-q^n}  - 2\sum_{n=1}^{\infty} \frac{a_n q^{2n}}{1-q^{2n}} \\
&  = \sum_{n=1}^{\infty} \frac{b_n q^n}{1-q^n} ,
\end{align*}
where 
$$
b_n = \begin{cases}
a_n, & \text{for $n$ odd,}\\
a_n-2a_{n/2} & \text{for $n$ even.}\\
\end{cases}
$$
We can similarly generate the terms in the slightly more general Lambert series expansions of 
\begin{align*} 
\sum_{n=1}^{\infty} \frac{a_n \cdot c^n q^n}{1 \pm q^n},\ \max(|cq|, |q|) < 1, 
\end{align*} 
by making the substitution of $a_k \mapsto c^k a_k$ in the factorization theorems proved above. 

\subsubsection{Topics for future research and investigation} 

The generalizations to the factorization theorems we have proved in this article suggested in this 
section comprise a new avenue of future research based on our new results. 
We anticipate that the investigation of these topics will be a 
fruitful source of new identities and insights to other special multiplicative functions enumerated by 
Lambert series generating functions of the forms defined by 
\eqref{eqn_GenLambertSeries_LacdAlphaBetaGammaDelta_exp_v1}. 

\subsection*{Acknowledgments} 

The authors thank the referees for their helpful insights and comments on 
preparing the manuscript.

\newpage 
\renewcommand{\thesection}{A}
\section{Appendix: Additional tables and figures} 
\label{AppendixA_Tables_Figures} 

\begin{figure}[ht!]

\begin{minipage}{\linewidth} 
\begin{center} 
\tiny
\begin{equation*} 
\boxed{ 
\begin{array}{cccccccccccccccccc} \hline 
 1 & 0 & 0 & 0 & 0 & 0 & 0 & 0 & 0 & 0 & 0 & 0 & 0 & 0 & 0 & 0 & 0 & 0 \\
 0 & 1 & 0 & 0 & 0 & 0 & 0 & 0 & 0 & 0 & 0 & 0 & 0 & 0 & 0 & 0 & 0 & 0 \\
 0 & -1 & 1 & 0 & 0 & 0 & 0 & 0 & 0 & 0 & 0 & 0 & 0 & 0 & 0 & 0 & 0 & 0 \\
 -1 & 1 & -1 & 1 & 0 & 0 & 0 & 0 & 0 & 0 & 0 & 0 & 0 & 0 & 0 & 0 & 0 & 0 \\
 0 & -2 & 0 & -1 & 1 & 0 & 0 & 0 & 0 & 0 & 0 & 0 & 0 & 0 & 0 & 0 & 0 & 0 \\
 -1 & 2 & 0 & 0 & -1 & 1 & 0 & 0 & 0 & 0 & 0 & 0 & 0 & 0 & 0 & 0 & 0 & 0 \\
 0 & -3 & 0 & -1 & 0 & -1 & 1 & 0 & 0 & 0 & 0 & 0 & 0 & 0 & 0 & 0 & 0 & 0 \\
 -1 & 3 & -1 & 2 & -1 & 0 & -1 & 1 & 0 & 0 & 0 & 0 & 0 & 0 & 0 & 0 & 0 & 0 \\
 1 & -4 & 1 & -2 & 1 & -1 & 0 & -1 & 1 & 0 & 0 & 0 & 0 & 0 & 0 & 0 & 0 & 0 \\
 -1 & 5 & -1 & 1 & 0 & 1 & -1 & 0 & -1 & 1 & 0 & 0 & 0 & 0 & 0 & 0 & 0 & 0 \\
 1 & -6 & 1 & -2 & 0 & -1 & 1 & -1 & 0 & -1 & 1 & 0 & 0 & 0 & 0 & 0 & 0 & 0 \\
 -1 & 7 & -1 & 4 & -1 & 2 & -1 & 1 & -1 & 0 & -1 & 1 & 0 & 0 & 0 & 0 & 0 & 0 \\
 2 & -8 & 1 & -4 & 1 & -2 & 1 & -1 & 1 & -1 & 0 & -1 & 1 & 0 & 0 & 0 & 0 & 0 \\
 -1 & 10 & -1 & 3 & -1 & 2 & 0 & 1 & -1 & 1 & -1 & 0 & -1 & 1 & 0 & 0 & 0 & 0 \\
 2 & -11 & 2 & -4 & 2 & -3 & 1 & -1 & 1 & -1 & 1 & -1 & 0 & -1 & 1 & 0 & 0 & 0 \\
 -2 & 13 & -2 & 7 & -2 & 3 & -2 & 3 & -1 & 1 & -1 & 1 & -1 & 0 & -1 & 1 & 0 & 0 \\
 3 & -15 & 2 & -7 & 2 & -3 & 1 & -3 & 2 & -1 & 1 & -1 & 1 & -1 & 0 & -1 & 1 & 0 \\
 -2 & 18 & -2 & 6 & -2 & 5 & -1 & 2 & -1 & 2 & -1 & 1 & -1 & 1 & -1 & 0 & -1 & 1 \\
 \hline
\end{array}
}
\end{equation*}
\end{center} 
\subcaption*{(i) $s_{n,k}$} 
\end{minipage} 

\begin{minipage}{\linewidth} 
\begin{center} 
\tiny 
\begin{equation*} 
\boxed{ 
\begin{array}{cccccccccccccccccc} \hline 
 1 & 0 & 0 & 0 & 0 & 0 & 0 & 0 & 0 & 0 & 0 & 0 & 0 & 0 & 0 & 0 & 0 & 0 \\
 0 & 1 & 0 & 0 & 0 & 0 & 0 & 0 & 0 & 0 & 0 & 0 & 0 & 0 & 0 & 0 & 0 & 0 \\
 0 & 1 & 1 & 0 & 0 & 0 & 0 & 0 & 0 & 0 & 0 & 0 & 0 & 0 & 0 & 0 & 0 & 0 \\
 1 & 0 & 1 & 1 & 0 & 0 & 0 & 0 & 0 & 0 & 0 & 0 & 0 & 0 & 0 & 0 & 0 & 0 \\
 1 & 2 & 1 & 1 & 1 & 0 & 0 & 0 & 0 & 0 & 0 & 0 & 0 & 0 & 0 & 0 & 0 & 0 \\
 2 & 0 & 1 & 1 & 1 & 1 & 0 & 0 & 0 & 0 & 0 & 0 & 0 & 0 & 0 & 0 & 0 & 0 \\
 3 & 3 & 2 & 2 & 1 & 1 & 1 & 0 & 0 & 0 & 0 & 0 & 0 & 0 & 0 & 0 & 0 & 0 \\
 3 & 3 & 2 & 1 & 2 & 1 & 1 & 1 & 0 & 0 & 0 & 0 & 0 & 0 & 0 & 0 & 0 & 0 \\
 5 & 4 & 3 & 3 & 2 & 2 & 1 & 1 & 1 & 0 & 0 & 0 & 0 & 0 & 0 & 0 & 0 & 0 \\
 6 & 3 & 4 & 3 & 2 & 2 & 2 & 1 & 1 & 1 & 0 & 0 & 0 & 0 & 0 & 0 & 0 & 0 \\
 9 & 8 & 6 & 5 & 4 & 3 & 2 & 2 & 1 & 1 & 1 & 0 & 0 & 0 & 0 & 0 & 0 & 0 \\
 8 & 8 & 5 & 4 & 4 & 3 & 3 & 2 & 2 & 1 & 1 & 1 & 0 & 0 & 0 & 0 & 0 & 0 \\
 14 & 12 & 10 & 8 & 6 & 5 & 4 & 3 & 2 & 2 & 1 & 1 & 1 & 0 & 0 & 0 & 0 & 0 \\
 14 & 11 & 10 & 8 & 7 & 5 & 4 & 4 & 3 & 2 & 2 & 1 & 1 & 1 & 0 & 0 & 0 & 0 \\
 20 & 15 & 13 & 11 & 9 & 8 & 6 & 5 & 4 & 3 & 2 & 2 & 1 & 1 & 1 & 0 & 0 & 0 \\
 22 & 18 & 15 & 13 & 10 & 9 & 7 & 5 & 5 & 4 & 3 & 2 & 2 & 1 & 1 & 1 & 0 & 0 \\
 31 & 27 & 22 & 18 & 15 & 12 & 10 & 8 & 6 & 5 & 4 & 3 & 2 & 2 & 1 & 1 & 1 & 0 \\
 30 & 26 & 22 & 18 & 15 & 12 & 11 & 9 & 7 & 6 & 5 & 4 & 3 & 2 & 2 & 1 & 1 & 1 \\
 \hline 
\end{array}
} 
\end{equation*} 
\end{center} 
\subcaption*{(ii) $s_{n,k}^{(-1)}$} 
\end{minipage}

\caption{The factorization where $C(q) := (-q; q)_{\infty}^{-1}$} 
\label{figure_factpair_v1} 

\end{figure} 

\begin{figure}[ht!]

\begin{minipage}{\linewidth} 
\begin{center} 
\small
\begin{equation*} 
\boxed{ 
\begin{array}{llllll} \hline 
 \frac{1}{1-a} & 0 & 0 & 0 & 0 & 0 \\
 \frac{-a-1}{a-1} & \frac{1}{1-a} & 0 & 0 & 0 & 0 \\
 \frac{-a^2-2 a-1}{a-1} & -\frac{a}{a-1} & \frac{1}{1-a} & 0 & 0 & 0 \\
 \frac{-a^3-2 a^2-3 a-1}{a-1} & \frac{-a^2-a-1}{a-1} & -\frac{a}{a-1} & \frac{1}{1-a} & 0 & 0 \\
 \frac{-a^4-2 a^3-4 a^2-4 a-1}{a-1} & -\frac{a \left(a^2+a+2\right)}{a-1} & \frac{-a^2-a}{a-1} & -\frac{a}{a-1} & \frac{1}{1-a} & 0 \\
 \frac{-a^5-2 a^4-4 a^3-6 a^2-5 a-1}{a-1} & \frac{-a^4-a^3-3 a^2-2 a-1}{a-1} & \frac{-a^3-a^2-a-1}{a-1} & \frac{-a^2-a}{a-1} & -\frac{a}{a-1} & \frac{1}{1-a} \\
 \hline
\end{array}
}
\end{equation*}
\end{center} 
\subcaption*{(i) $s_{n,k}$} 
\end{minipage} 

\begin{minipage}{\linewidth} 
\begin{center} 
\small 
\begin{equation*} 
\boxed{ 
\begin{array}{llllll} \hline 
 1-a & 0 & 0 & 0 & 0 & 0 \\
 a^2-1 & 1-a & 0 & 0 & 0 & 0 \\
 a^2-1 & a^2-a & 1-a & 0 & 0 & 0 \\
 a^2-a^3 & a^2-1 & a^2-a & 1-a & 0 & 0 \\
 -a^3+2 a^2-1 & -a^3+2 a^2-a & a^2-a & a^2-a & 1-a & 0 \\
 -2 a^3+a^2+1 & -a^3+a^2+a-1 & -a^3+2 a^2-1 & a^2-a & a^2-a & 1-a \\
 \hline 
\end{array}
} 
\end{equation*} 
\end{center} 
\subcaption*{(ii) $s_{n,k}^{(-1)}$} 
\end{minipage}

\caption{The factorization where $C(q) := (a; q)_{\infty}^{-1}$} 
\label{figure_factpair_v1} 

\end{figure} 

\begin{figure}[ht!]

\begin{minipage}{\linewidth} 
\begin{center} 
\tiny
\begin{equation*} 
\boxed{ 
\begin{array}{cccccccccccccccccc} \hline 
 1 & 0 & 0 & 0 & 0 & 0 & 0 & 0 & 0 & 0 & 0 & 0 & 0 & 0 & 0 & 0 \\
 0 & 1 & 0 & 0 & 0 & 0 & 0 & 0 & 0 & 0 & 0 & 0 & 0 & 0 & 0 & 0 \\
 0 & -1 & 1 & 0 & 0 & 0 & 0 & 0 & 0 & 0 & 0 & 0 & 0 & 0 & 0 & 0 \\
 -1 & 1 & -1 & 1 & 0 & 0 & 0 & 0 & 0 & 0 & 0 & 0 & 0 & 0 & 0 & 0 \\
 0 & -2 & 0 & -1 & 1 & 0 & 0 & 0 & 0 & 0 & 0 & 0 & 0 & 0 & 0 & 0 \\
 -1 & 2 & 0 & 0 & -1 & 1 & 0 & 0 & 0 & 0 & 0 & 0 & 0 & 0 & 0 & 0 \\
 0 & -3 & 0 & -1 & 0 & -1 & 1 & 0 & 0 & 0 & 0 & 0 & 0 & 0 & 0 & 0 \\
 -1 & 3 & -1 & 2 & -1 & 0 & -1 & 1 & 0 & 0 & 0 & 0 & 0 & 0 & 0 & 0 \\
 1 & -4 & 1 & -2 & 1 & -1 & 0 & -1 & 1 & 0 & 0 & 0 & 0 & 0 & 0 & 0 \\
 -1 & 5 & -1 & 1 & 0 & 1 & -1 & 0 & -1 & 1 & 0 & 0 & 0 & 0 & 0 & 0 \\
 1 & -6 & 1 & -2 & 0 & -1 & 1 & -1 & 0 & -1 & 1 & 0 & 0 & 0 & 0 & 0 \\
 -1 & 7 & -1 & 4 & -1 & 2 & -1 & 1 & -1 & 0 & -1 & 1 & 0 & 0 & 0 & 0 \\
 2 & -8 & 1 & -4 & 1 & -2 & 1 & -1 & 1 & -1 & 0 & -1 & 1 & 0 & 0 & 0 \\
 -1 & 10 & -1 & 3 & -1 & 2 & 0 & 1 & -1 & 1 & -1 & 0 & -1 & 1 & 0 & 0 \\
 2 & -11 & 2 & -4 & 2 & -3 & 1 & -1 & 1 & -1 & 1 & -1 & 0 & -1 & 1 & 0 \\
 -2 & 13 & -2 & 7 & -2 & 3 & -2 & 3 & -1 & 1 & -1 & 1 & -1 & 0 & -1 & 1 \\
 \hline
\end{array}
}
\end{equation*}
\end{center} 
\subcaption*{(i) $s_{n,k}$} 
\end{minipage} 

\begin{minipage}{\linewidth} 
\begin{center} 
\tiny 
\begin{equation*} 
\boxed{ 
\begin{array}{cccccccccccccccccc} \hline 
 1 & 0 & 0 & 0 & 0 & 0 & 0 & 0 & 0 & 0 & 0 & 0 & 0 & 0 & 0 & 0 \\
 0 & 1 & 0 & 0 & 0 & 0 & 0 & 0 & 0 & 0 & 0 & 0 & 0 & 0 & 0 & 0 \\
 0 & 1 & 1 & 0 & 0 & 0 & 0 & 0 & 0 & 0 & 0 & 0 & 0 & 0 & 0 & 0 \\
 1 & 0 & 1 & 1 & 0 & 0 & 0 & 0 & 0 & 0 & 0 & 0 & 0 & 0 & 0 & 0 \\
 1 & 2 & 1 & 1 & 1 & 0 & 0 & 0 & 0 & 0 & 0 & 0 & 0 & 0 & 0 & 0 \\
 2 & 0 & 1 & 1 & 1 & 1 & 0 & 0 & 0 & 0 & 0 & 0 & 0 & 0 & 0 & 0 \\
 3 & 3 & 2 & 2 & 1 & 1 & 1 & 0 & 0 & 0 & 0 & 0 & 0 & 0 & 0 & 0 \\
 3 & 3 & 2 & 1 & 2 & 1 & 1 & 1 & 0 & 0 & 0 & 0 & 0 & 0 & 0 & 0 \\
 5 & 4 & 3 & 3 & 2 & 2 & 1 & 1 & 1 & 0 & 0 & 0 & 0 & 0 & 0 & 0 \\
 6 & 3 & 4 & 3 & 2 & 2 & 2 & 1 & 1 & 1 & 0 & 0 & 0 & 0 & 0 & 0 \\
 9 & 8 & 6 & 5 & 4 & 3 & 2 & 2 & 1 & 1 & 1 & 0 & 0 & 0 & 0 & 0 \\
 8 & 8 & 5 & 4 & 4 & 3 & 3 & 2 & 2 & 1 & 1 & 1 & 0 & 0 & 0 & 0 \\
 14 & 12 & 10 & 8 & 6 & 5 & 4 & 3 & 2 & 2 & 1 & 1 & 1 & 0 & 0 & 0 \\
 14 & 11 & 10 & 8 & 7 & 5 & 4 & 4 & 3 & 2 & 2 & 1 & 1 & 1 & 0 & 0 \\
 20 & 15 & 13 & 11 & 9 & 8 & 6 & 5 & 4 & 3 & 2 & 2 & 1 & 1 & 1 & 0 \\
 22 & 18 & 15 & 13 & 10 & 9 & 7 & 5 & 5 & 4 & 3 & 2 & 2 & 1 & 1 & 1 \\
 \hline 
\end{array}
} 
\end{equation*} 
\end{center} 
\subcaption*{(ii) $s_{n,k}^{(-1)}$} 
\end{minipage}

\caption{The factorization where $C(q) := (q; q^2)_{\infty}$} 
\label{figure_factpair_v3} 

\end{figure} 

\begin{figure}[ht!]

\begin{minipage}{\linewidth} 
\begin{center} 
\tiny
\begin{equation*} 
\boxed{ 
\begin{array}{cccccccccccccccccc} \hline 
 1 & 0 & 0 & 0 & 0 & 0 & 0 & 0 & 0 & 0 & 0 & 0 & 0 & 0 & 0 & 0 \\
 2 & 1 & 0 & 0 & 0 & 0 & 0 & 0 & 0 & 0 & 0 & 0 & 0 & 0 & 0 & 0 \\
 3 & 1 & 1 & 0 & 0 & 0 & 0 & 0 & 0 & 0 & 0 & 0 & 0 & 0 & 0 & 0 \\
 5 & 2 & 1 & 1 & 0 & 0 & 0 & 0 & 0 & 0 & 0 & 0 & 0 & 0 & 0 & 0 \\
 7 & 3 & 1 & 1 & 1 & 0 & 0 & 0 & 0 & 0 & 0 & 0 & 0 & 0 & 0 & 0 \\
 10 & 4 & 3 & 1 & 1 & 1 & 0 & 0 & 0 & 0 & 0 & 0 & 0 & 0 & 0 & 0 \\
 14 & 6 & 3 & 2 & 1 & 1 & 1 & 0 & 0 & 0 & 0 & 0 & 0 & 0 & 0 & 0 \\
 19 & 8 & 4 & 3 & 2 & 1 & 1 & 1 & 0 & 0 & 0 & 0 & 0 & 0 & 0 & 0 \\
 25 & 11 & 7 & 4 & 2 & 2 & 1 & 1 & 1 & 0 & 0 & 0 & 0 & 0 & 0 & 0 \\
 33 & 14 & 8 & 5 & 4 & 2 & 2 & 1 & 1 & 1 & 0 & 0 & 0 & 0 & 0 & 0 \\
 43 & 19 & 10 & 7 & 5 & 3 & 2 & 2 & 1 & 1 & 1 & 0 & 0 & 0 & 0 & 0 \\
 55 & 24 & 15 & 9 & 6 & 5 & 3 & 2 & 2 & 1 & 1 & 1 & 0 & 0 & 0 & 0 \\
 70 & 31 & 18 & 12 & 8 & 6 & 4 & 3 & 2 & 2 & 1 & 1 & 1 & 0 & 0 & 0 \\
 88 & 39 & 22 & 15 & 10 & 7 & 6 & 4 & 3 & 2 & 2 & 1 & 1 & 1 & 0 & 0 \\
 110 & 49 & 30 & 19 & 14 & 10 & 7 & 5 & 4 & 3 & 2 & 2 & 1 & 1 & 1 & 0 \\
 137 & 61 & 36 & 24 & 17 & 12 & 9 & 7 & 5 & 4 & 3 & 2 & 2 & 1 & 1 & 1 \\
 \hline
\end{array}
}
\end{equation*}
\end{center} 
\subcaption*{(i) $s_{n,k}$} 
\end{minipage} 

\begin{minipage}{\linewidth} 
\begin{center} 
\tiny 
\begin{equation*} 
\boxed{ 
\begin{array}{cccccccccccccccccc} \hline 
 1 & 0 & 0 & 0 & 0 & 0 & 0 & 0 & 0 & 0 & 0 & 0 & 0 & 0 & 0 & 0 \\
 -2 & 1 & 0 & 0 & 0 & 0 & 0 & 0 & 0 & 0 & 0 & 0 & 0 & 0 & 0 & 0 \\
 -1 & -1 & 1 & 0 & 0 & 0 & 0 & 0 & 0 & 0 & 0 & 0 & 0 & 0 & 0 & 0 \\
 0 & -1 & -1 & 1 & 0 & 0 & 0 & 0 & 0 & 0 & 0 & 0 & 0 & 0 & 0 & 0 \\
 0 & -1 & 0 & -1 & 1 & 0 & 0 & 0 & 0 & 0 & 0 & 0 & 0 & 0 & 0 & 0 \\
 1 & 1 & -2 & 0 & -1 & 1 & 0 & 0 & 0 & 0 & 0 & 0 & 0 & 0 & 0 & 0 \\
 0 & -1 & 1 & -1 & 0 & -1 & 1 & 0 & 0 & 0 & 0 & 0 & 0 & 0 & 0 & 0 \\
 0 & 1 & 0 & 0 & -1 & 0 & -1 & 1 & 0 & 0 & 0 & 0 & 0 & 0 & 0 & 0 \\
 2 & 0 & 0 & -1 & 1 & -1 & 0 & -1 & 1 & 0 & 0 & 0 & 0 & 0 & 0 & 0 \\
 -1 & 2 & -1 & 2 & -2 & 1 & -1 & 0 & -1 & 1 & 0 & 0 & 0 & 0 & 0 & 0 \\
 1 & -2 & 2 & -1 & 1 & -1 & 1 & -1 & 0 & -1 & 1 & 0 & 0 & 0 & 0 & 0 \\
 -1 & 2 & 0 & 1 & 0 & 0 & -1 & 1 & -1 & 0 & -1 & 1 & 0 & 0 & 0 & 0 \\
 2 & -2 & 2 & -2 & 2 & -1 & 1 & -1 & 1 & -1 & 0 & -1 & 1 & 0 & 0 & 0 \\
 -2 & 3 & -3 & 3 & -2 & 3 & -2 & 1 & -1 & 1 & -1 & 0 & -1 & 1 & 0 & 0 \\
 3 & -1 & 2 & -1 & 1 & -2 & 2 & -1 & 1 & -1 & 1 & -1 & 0 & -1 & 1 & 0 \\
 -3 & 2 & -2 & 2 & -1 & 2 & -1 & 1 & -1 & 1 & -1 & 1 & -1 & 0 & -1 & 1 \\
 \hline 
\end{array}
} 
\end{equation*} 
\end{center} 
\subcaption*{(ii) $s_{n,k}^{(-1)}$} 
\end{minipage}

\caption{The factorization where $C(q) := (q; q^2)_{\infty}^{-1}$} 
\label{figure_factpair_v4} 

\end{figure}

\end{document}